\definecolor{ForestGreen}{rgb}{0.1,0.6,0.05}
\definecolor{EgyptBlue}{rgb}{0.063,0.1,0.6}
\newtheorem{theorem}{Theorem}
\newtheorem{prop}[theorem]{Proposition}
\newtheorem{lem}[theorem]{Lemma}
\theoremstyle{definition}
\newtheorem{defn}[theorem]{Definition}
\newtheorem{rmk}[theorem]{Remark}
\let\OLDthebibliography\thebibliography
\renewcommand\thebibliography[1]{
	\OLDthebibliography{#1}
	\setlength{\parskip}{1pt}
	\setlength{\itemsep}{1pt plus 0.3ex}
}
\numberwithin{equation}{section}
\numberwithin{theorem}{section}
\numberwithin{equation}{section}
\numberwithin{theorem}{section}
\DeclarePairedDelimiter\norm{\lVert}{\rVert}
\definecolor{ForestGreen}{rgb}{0.1,0.6,0.05}
\definecolor{EgyptBlue}{rgb}{0.063,0.1,0.6}
\subjclass[2010]{Primary  58J50; Secondary 35P15}
\title [Szeg\"{o}-Weinberger type inequalities in space forms]{Szeg\"{o}-Weinberger type inequalities for symmetric domains in simply connected space forms}
\author{T. V. Anoop$^1$ \and Sheela Verma$^*$}
\address{$1$ Department of Mathematics, Indian Institute of Technology Madras, Chennai 36, India}
\email{anoop@iitm.ac.in}
\address{$*$  Corresponding author, Department of Mathematical Sciences, Indian Institute of Technology (BHU), Varanasi, India}
\email{sheela.mat@iitbhu.ac.in}
\keywords{Neumann Eigenvalues, Szeg\"{o}-Weinberger inequality,  space forms, geodesic normal coordinates, symmetries}
\begin{document}
	
	\begin{abstract}
		We consider the Neumann eigenvalue problem for Laplacian on a bounded multi-connected domain contained in simply connected space forms. Under certain symmetry assumptions on the domain, we prove Szeg\"{o}-Weinberger type inequalities for the first $n$ positive Neumann eigenvalues.
		
	\end{abstract}

	\maketitle
	
	\section{Introduction}
	
	Let $\Omega \subset {M}$ be a bounded domain with smooth boundary $\partial \Omega$ in a complete connected Riemannian manifold $M$ of dimension $n \geq 2$. Consider the Neumann eigenvalue problem on $\Omega$ 
	\begin{align}\label{Neumann problem}
		\begin{array}{rcll}
			\Delta \phi &=&\mu \phi & \text{ in } \Omega,\\
			\frac{\partial \phi}{\partial \nu} &=& 0 & \text{ on } \partial \Omega,
		\end{array}
	\end{align}
	where $\nu$ represents the outward unit normal to $\partial \Omega$. The set of all Neumann eigenvalues form a discrete sequence $0 = \mu_{1}(\Omega) < \mu_{2}(\Omega)  \leq \mu_3(\Omega) \leq \cdots \nearrow \infty$. This problem \eqref{Neumann problem} models the vibrations of a homogeneous free membrane.
	
	In 1952, Kornhauser and Stakgold \cite{KS} using a perturbation method, showed that the disk is a local maximizer of $\mu_{2}(\Omega)$ on the class of all simply connected planar domains of a given area. Later, Szeg\"{o} \cite{S} with the help of conformal mapping techniques (the `method of conformal transplantation'), proved that the ball is indeed a global maximum on the same class of domains. In 1956,  Weinberger \cite{W} generalized this result for the class of bounded Lipschitz domains in $\mathbb{R}^{n}$ with given volume, without the simply connectedness assumption. Let $\Omega^*$ be the ball centred at the origin in $\mathbb{R}^n$ of the same volume as $\Omega$. Then Weinberger's result can be stated as the following inequality (Szeg\"{o}-Weinberger inequality):
	\begin{equation}\label{ineq:SW}
		\mu_2(\Omega)\le \mu_2(\Omega^*).
	\end{equation}
	Later, Szeg\"{o} and Weinberger observed that (see page 634 of \cite{W}) Szeg\"{o}'s proof for \eqref{ineq:SW} for simply connected planar domains can be extended to the following inequality:
	\begin{equation}\label{ineq:SW2}
		\frac{1}{\mu_2(\Omega)}+\frac{1}{\mu_3(\Omega)}\ge \frac{1}{\mu_2(\Omega^*)}+\frac{1}{\mu_3(\Omega^*)}.
	\end{equation}
	Since $\mu_2(\Omega^*)=\mu_3(\Omega^*),$ and $\mu_3(\Omega)\ge \mu_2(\Omega),$ the above inequality clearly yields  \eqref{ineq:SW}.
	
	In \cite{B1}, Bandle  extended the \eqref{ineq:SW2} (and hence  \eqref{ineq:SW}) to  the case of an inhomogeneous free membrane using conformal mapping techniques, see also \cite[Theorem 3.12]{B2}. 
	In \cite[page 94 ]{C2}, Chavel remarked that using Weinberger's approach, one can easily prove the  Szeg\"{o}-Weinberger inequality for domains in space forms of constant negative sectional curvature. However,  in the case of space forms of constant positive sectional curvature, Weinberger's approach will work only for the domains that are contained in sufficiently small geodesic balls. For example, for dimension 2, $\Omega$ must  contained in a geodesic ball of radius $\pi/(4\sqrt{\kappa}), $ where $\kappa$ is the constant positive sectional curvature of the space form, see  \cite[page 80]{C1}). In \cite{AB},   Ashbaugh and Benguria improved this result for the domains contained in a geodesic ball of radius $\pi/(2\sqrt{\kappa})$ for all dimensions.  
	For extension of \eqref{ineq:SW} to domains in more general manifolds, see \cite{ARGS, SV, KW, Xu}.
	It is known that
	\begin{equation}\label{ineq:SW-k}
		\mu_3(\Omega)< 2^\frac{2}{n} \mu_3(\Omega^*)
	\end{equation}
	due to  Girouard,  Nadirashvili, \&  Polterovich for bounded Jordan domains in $\mathbb{R}^2$
	\cite{GNP}  and due to  Bucur \& Henrot \cite{BH} for general domains in  $\mathbb{R}^n$ with $n\geq 2.$ However, to 
	establish   the Szeg\"{o}-Weinberger type inequality for the higher Neumann eigenvalues one needs to make certain symmetry restriction  on $\Omega$.

	For $k\in \mathbb{N},$ and $i,j\in \left\{1,2,\ldots, n\right\}$, let $R_{i,j}^{\frac{2 \pi}{k}}$ be  the rotation (in the anti-clockwise direction with respect to the origin) by an angle $\frac{2 \pi}{k}$ in the coordinate plane $(x_{i}, x_{j}).$ A domain $\Omega \subset \mathbb{R}^{n}$ is said to be \textit{symmetric of order $k$} with respect to the origin, if there exists a rotation $R$ on $\mathbb{R}^n$ such that  $R_{i,j}^{\frac{2 \pi}{k}}(R(\Omega)) = R(\Omega), \forall\, i,j \in \left\{1,2,\ldots, n\right\}.$ A domain $\Omega \subset \mathbb{R}^{n}$ is said to be \textit{centrally symmetric} with respect to the origin, if $x \in \Omega$ if and only if $-x \in \Omega$.
	
	In \cite[Section 5]{H}, Hersch proved that for any Jordan domain $\Omega \subset \mathbb{R}^{2}$, symmetric of order $k \geq 3$, 
	\begin{equation}\label{ineq:Hersch}
		\mu_{3}(\Omega) \leq \mu_{3}(\Omega^*).
	\end{equation}  
	For a smooth simply connected domain $\Omega$ with such symmetries, Ashbaugh and Benguria \cite[Lemma 4.1]{AB1} proved that,  $\mu_2(\Omega)=\mu_3(\Omega).$ In particular, this yields  \eqref{ineq:Hersch}  from \eqref{ineq:SW}. If $\Omega$ is  symmetric of order $4$, then they  obtained \eqref{ineq:Hersch}, without the simply connectedness assumption on $\Omega$, see \cite[Theorem 4.3]{AB1}.  However, for a Jordan domain $\Omega$ with symmetry of order $4$, Hersch \cite[Section 5]{H}  obtained  \begin{equation}\label{ineq:Hersch2}
		\mu_{4}(\Omega) \leq \mu_{4}(\Omega^*).
	\end{equation}
	For further inequalities involving $\mu_{k}(\Omega)$ for domains with symmetry of order $k \geq 2$, see \cite{EP1,EP2}.
	
	Recently, the inequalities for higher eigenvalues  have been extended for multiply connected domains by considering the eigenvalues of an appropriate concentric annular region instead of  the  ball $\Omega^*.$ In \cite{ABD}, authors compared the Neumann eigenvalues of Laplacian on a Lipschitz domain $\Omega = \Omega_{out} \setminus \overline{\Omega}_{in}$  and a concentric annular region $B_{\beta} \setminus \overline{B}_{\alpha}$, where
	\begin{enumerate}[label = (\roman*)]
		\item  $\Omega_{in}$ is compactly contained in  $\Omega_{out},$
		\item $0 \leq \alpha< \beta$ is such that $B_{\alpha} \subset \Omega_{in}$ and $|\Omega| = |B_{\beta} \setminus \overline{B}_{\alpha}|$.
	\end{enumerate}
	They proved that, if $\Omega$ is either symmetric of order 2 or centrally symmetric, then $$\mu_2({\Omega})\leq \mu_2({B_{\beta} \setminus \overline{B}_{\alpha}}),$$ and if domain $\Omega$ is symmetric of order 4, then 
	\begin{equation}\label{ineq:4sym}
		\mu_i({\Omega})\leq \mu_i({B_{\beta} \setminus \overline{B}_{\alpha}}) \text{ for } i=2,3,\ldots, n+2.
	\end{equation}  
	In \cite{SVGS}, a similar result has been proved for the first nonzero Neumann eigenvalue for centrally symmetric doubly connected domains contained in noncompact rank-1 symmetric spaces with the additional assumption that inner domain $\Omega_{in}$ is a geodesic ball.
	
	In this article, we partially extend \eqref{ineq:4sym} for the domains with holes contained in a simply connected space form. It is well known that a smooth, connected, and simply-connected complete Riemannian manifold with constant curvature is isometric to either of $\mathbb{R}^{n}$(for zero curvature), $\mathbb{S}^{n}$(for positive curvature) and $\mathbb{H}^{n}$(for negative curvature). Due to this fact, we prove our main result for $M= \mathbb{S}^{n}$ and $\mathbb{H}^{n}$.
	
	Now we provide the notations used in this article and state the notion of $k$-symmetry for the domains in space forms.
	
	\noindent \textbf{Notations:} Throughout this article, $M$ represents either $\mathbb{S}^{n}$ or $\mathbb{H}^{n}$, $n \geq 2$. Let $p= (1,0,0, \ldots, 0)\in M$ be a fixed point and  $B_{r}$  denotes the geodesic ball in $M$ of radius $r$.  Let $\exp_{p} : T_{p}(M) \rightarrow M$ be the exponential map and $X = (X_1, X_2, \ldots, X_n)$ be the geodesic normal coordinates centered at $p$. We identify any domain $\Omega$ in $M$ with $\exp_{p}^{-1}(\Omega)$. 
	
	\begin{defn}
		Let a domain $\Omega \subset M$ be represented as $\Omega = \exp_{p}(U)$ for $p \in M$ and $U \subset T_{p}(M)$. Then  $\Omega$ is said to be \textit{symmetric of order $k$} with respect to $p,$ if $U$ is symmetric of order $k$ with respect to the origin. The domain $\Omega$ is said to be \textit{geodesically symmetric} or \textit{centrally symmetric} with respect to the point $p$, if $U$ is centrally symmetric about the origin. 
	\end{defn}
	
	\begin{rmk}
		For $n=2$, a domain $\Omega$ in $M$ is centrally symmetric if and only if it is symmetric of order $2$. For $n \geq 3$, the central symmetry does not imply the symmetry of order $2$ \cite[Lemma 5.2]{ABD}. If $\Omega$ is a domain having symmetry of order $k \in \mathbb{N}$ with $k \neq 2,4$, then $\Omega$ is either a geodesic ball or a concentric annular domain. For a proof, see \cite[Proposition 5.1]{ABD}.
	\end{rmk}
	
	We make the following assumptions on the domain $\Omega:$
	
	\begin{enumerate}[label={$\mathbf{(A_1)}$}]
		\item \label{assumption1} Let  $\Omega = \Omega_{out} \setminus \overline{\Omega}_{in}$ be a domain in $M$, where $\overline{\Omega}_{in} \subset \Omega_{out}$. Here domain $\Omega$ might possess other holes except $\Omega_{in}$ or might possess no holes at all i.e., $\Omega_{in} = \emptyset$. If $\Omega_{in}$ is non empty, then we assume that $p \in \Omega_{in}$. In the case of $M= \mathbb{S}^{n}$, we additionally assume that  $\Omega$ is contained in a geodesic ball of radius $\pi/2$ with center at $p.$ 
	\end{enumerate}
	
	\begin{enumerate}[label={$\mathbf{(A_2)}$}]
		\item\label{assumption2} Let $B_{R_{1}}$ and $B_{R_{2}}$ be the geodesic balls in $M$, centered at $p$, of radius $R_{1}$ and $R_{2}$, respectively such that $B_{R_{1}} \subset \Omega_{in}$ and Vol$(\Omega)$ = Vol$(B_{R_{2}} \setminus \overline{B}_{R_{1}})$. 
	\end{enumerate} 
	
	\begin{rmk}
		It follows from assumption \ref{assumption1} that for our choice of $\Omega$, there exists a neighborhood $U$ of the origin in $T_{p}M$ such that $\exp_{p}: U \rightarrow \Omega$ is a diffeomorphism.
	\end{rmk}
	
	The main result of this article is as follows.
	\begin{theorem} \label{main result}
		Let  $\Omega \subset M,$ $R_1$ and $R_2$ be as given in \ref{assumption1} and \ref{assumption2}.  In addition,
		\begin{enumerate}
			\item if $\Omega$ is either symmetric of order $2$ or centrally symmetric with respect to the point $p$, then 
			\begin{align} \label{main result: order 2}
				\mu_2({\Omega})\leq \mu_2(B_{R_{2}} \setminus \overline{B}_{R_{1}}),
			\end{align}
			\item if $\Omega$ is symmetric of order $4$ with respect to the point $p$, then for $i=2,\ldots, n+1$,
			\begin{align}\label{main result: order 4}
				\mu_i({\Omega})\leq \mu_i(B_{R_{2}} \setminus \overline{B}_{R_{1}}) = \mu_2(B_{R_{2}} \setminus \overline{B}_{R_{1}}).
			\end{align}
		\end{enumerate}
	\end{theorem}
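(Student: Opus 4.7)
The plan is to employ Weinberger's method in a form adapted to multi-connected domains in space forms. Let $A = B_{R_{2}} \setminus \overline{B}_{R_{1}}$ denote the reference annulus. Separation of variables in geodesic polar coordinates $(r,\omega)$ on $M$ shows that the first $n$ positive Neumann eigenvalues of $A$ all coincide, say with $\mu_{2}(A)$, and are realised by $\psi_{i}(X) = g(r)\, X_{i}/r$, $i = 1,\ldots, n$, where $g$ is the first positive-eigenvalue solution of the Sturm-Liouville problem
$$(S^{n-1} g')' - (n-1)\, S^{n-3} g + \mu_{2}(A)\, S^{n-1} g = 0 \text{ on } (R_{1},R_{2}), \quad g'(R_{1}) = g'(R_{2}) = 0,$$
with $S(r) = \sin r$ on $\mathbb{S}^{n}$ and $S(r) = \sinh r$ on $\mathbb{H}^{n}$; this accounts for the equality in \eqref{main result: order 4}. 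I extend $g$ past $R_{2}$ by the constant $g(R_{2})$ to a $C^{1}$ function $G$ and, using the global geodesic normal coordinates furnished by \ref{assumption1}, define the trial functions
$$\varphi_{i}(X) := G(r(X))\, \frac{X_{i}}{r(X)}, \qquad i = 1,\ldots,n.$$
The pointwise identities $\sum_{i}(X_{i}/r)^{2} = 1$ and $\sum_{i} |\nabla_{S^{n-1}}(X_{i}/r)|^{2} = n-1$ on the sphere of directions give $\sum_{i} \varphi_{i}^{2} = G(r)^{2}$ and $\sum_{i} |\nabla \varphi_{i}|^{2} = G'(r)^{2} + (n-1) G(r)^{2}/S(r)^{2}$.

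Next I would use the symmetry to obtain the orthogonality relations needed in the min-max principle. Under the $\pi$-rotation $R_{i\ell}^{\pi}$ in the $(X_{i},X_{\ell})$-plane (for any $\ell \ne i$) the function $\varphi_{i}$ is odd while $\Omega$ is preserved when $\Omega$ is of order $2$ or centrally symmetric; hence $\int_{\Omega} \varphi_{i}\, dV = 0$ for every $i$. When $\Omega$ is of order $4$ the additional rotation $R_{ij}^{\pi/2}$ sends $(\varphi_{i},\varphi_{j})$ to $(-\varphi_{j},\varphi_{i})$; this forces $\int_{\Omega} \varphi_{i}\varphi_{j}\, dV = 0 = \int_{\Omega} \nabla \varphi_{i}\cdot \nabla \varphi_{j}\, dV$ for $i \ne j$, together with $\int_{\Omega}\varphi_{i}^{2}\, dV = \int_{\Omega}\varphi_{j}^{2}\, dV$ and the analogous equality for gradients. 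The analytic core of the proof is the aggregate Rayleigh-quotient estimate
$$\frac{\int_{\Omega} \bigl[G'(r)^{2} + (n-1)G(r)^{2}/S(r)^{2}\bigr] dV}{\int_{\Omega} G(r)^{2}\, dV} \le \mu_{2}(A),$$
equivalently $\int_{\Omega} F(r)\, dV \le 0$ where $F(r) := G'(r)^{2} + (n-1) G(r)^{2}/S(r)^{2} - \mu_{2}(A)\, G(r)^{2}$. The identity $\int_{A} F\, dV = 0$ is precisely the Rayleigh-quotient characterisation of $\mu_{2}(A)$. Since $|\Omega| = |A|$ with $B_{R_{1}} \subset \Omega_{in}$ (so $\Omega$ omits the inner ball but may extend beyond $B_{R_{2}}$), a bathtub comparison reduces the desired integral inequality to the monotonicity lemma that $F$ is nonincreasing on $[R_{1},\infty)$, truncated at $\pi/2$ on $\mathbb{S}^{n}$ by \ref{assumption1}. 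Past $R_{2}$ this is immediate from $G' \equiv 0$ and $S$ increasing; on $[R_{1},R_{2}]$ it requires differentiating $F$, substituting from the Sturm-Liouville equation, and exploiting positivity of $g$ and of $S'$. Establishing this monotonicity uniformly in both curvature regimes is the principal technical obstacle.

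With the integral bound in hand, both parts follow from min-max. For (1) the arithmetic inequality $\min_{i} \int|\nabla\varphi_{i}|^{2}/\int \varphi_{i}^{2} \le \sum_{i} \int|\nabla\varphi_{i}|^{2}/\sum_{i}\int \varphi_{i}^{2} \le \mu_{2}(A)$ furnishes some index $i$ for which $\int_{\Omega} \varphi_{i} = 0$ and the Rayleigh quotient is bounded by $\mu_{2}(A)$, whence $\mu_{2}(\Omega) \le \mu_{2}(A)$. For (2) the order-$4$ orthogonality makes every nonzero $u \in \mathrm{span}\{\varphi_{1},\ldots,\varphi_{n}\}$ have identical Rayleigh quotient, equal to the common average already bounded by $\mu_{2}(A)$; applying min-max to the $(n+1)$-dimensional space $\mathrm{span}\{1, \varphi_{1}, \ldots, \varphi_{n}\}$ (whose elements $c + \sum a_{i} \varphi_{i}$ satisfy $\int u \, dV = c|\Omega|$ and for which the Rayleigh quotient is maximised at $c = 0$) yields $\mu_{n+1}(\Omega) \le \mu_{2}(A) = \mu_{n+1}(A)$, giving the full chain $\mu_{i}(\Omega) \le \mu_{i}(A)$ for $i = 2,\ldots, n+1$.
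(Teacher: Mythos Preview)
Your plan is essentially the paper's proof: Weinberger-type trial functions $G(r)X_i/r$ built from the first radial Neumann eigenfunction of the annulus (extended by a constant past $R_2$), orthogonality harvested from the rotational symmetries, and the Courant--Fischer min--max. The identification $\mu_2(A)=\cdots=\mu_{n+1}(A)=\mu_{1,1}$ is also handled in the paper via a Sturm--Liouville comparison (their Lemma~\ref{eigenvalue comparison}).

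The one substantive divergence is in how you intend to establish the aggregate estimate $\int_\Omega F\le 0$, where $F(r)=G'(r)^2+(n-1)G(r)^2/S(r)^2-\mu_{1,1}G(r)^2$. You reduce this to proving that $F$ is \emph{nonincreasing} on $[R_1,\infty)$ and flag it as ``the principal technical obstacle''. The paper does not prove (and does not need) full monotonicity of $F$. The bathtub step only requires the two one-sided comparisons
\[
F(r)\ge F(R_2)\quad (R_1<r<R_2),\qquad F(r)\le F(R_2)\quad (r\ge R_2),
\]
since $\Omega\setminus A\subset\{r\ge R_2\}$, $A\setminus\Omega\subset\{R_1<r<R_2\}$, and $|\Omega\setminus A|=|A\setminus\Omega|$. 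The second comparison is immediate from $G'\equiv 0$ and $S$ increasing. For the first, the paper proves the pointwise inequality (their Lemma~\ref{lem: increasing eigenfunction}(iii))
\[
\Bigl(\tfrac{n-1}{S(r)^2}-\mu_{1,1}\Bigr)g(r)^2\ \ge\ \Bigl(\tfrac{n-1}{S(R_2)^2}-\mu_{1,1}\Bigr)g(R_2)^2,\qquad R_1<r<R_2,
\]
and this, together with $(g')^2\ge 0$, gives $F(r)\ge F(R_2)$ at once. That lemma has a two-line proof: one shows $\mu_{1,1}=(n-1)/S(b)^2$ for some $b\in(R_1,R_2)$ (Rolle applied to $S^{n-1}g'$) and that $g$ is strictly increasing on $(R_1,R_2)$; then for $r\le b$ the left side is nonnegative while the right is nonpositive, and for $r>b$ both factors on the left dominate those on the right. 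So the obstacle you single out dissolves: you never need $F'\le 0$ on $(R_1,R_2)$, only the comparison with the endpoint value, and that comes cheaply from monotonicity of $g$ rather than a delicate computation with $F'$.

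Everything else in your outline (the symmetry-based vanishing $\int_\Omega\varphi_i=0$, the $L^2$- and $H^1$-orthogonality under order-$4$ symmetry, the summed gradient identity, and the min--max conclusions for both parts) matches the paper's Proposition~\ref{Prop:integral inequalities} and Section~\ref{sec:main result}.
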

	
	\begin{rmk}
		If $\Omega_{in}=\emptyset$, then \eqref{main result: order 4} extend \cite[Theorem 4.3]{AB1} to $\mathbb{S}^{n}$ and $\mathbb{H}^{n}$, $n \geq 2$.  To the best of our knowledge, this is the first result that gives the Szeg\"{o}-Weinberger type inequality for the higher Neumann eigenvalues on space forms. In \cite{FL}, authors proved that the union of two disjoint geodesic balls of the same volume maximise the third Neumann eigenvalue among the regions of given
		volume in hyperbolic spaces.
	\end{rmk}
	
	\begin{rmk}
		As an easy consequence of the above theorem, we also obtain an isoperimetric inequality for the harmonic mean of the first $n$ nonzero Neumann eigenvalues of  Lapacian on a domain  $\Omega$ which is symmetric of order $4$ as given below:
		\begin{align*}
			\frac{1}{\mu_2({\Omega})} + \frac{1}{\mu_3({\Omega})} + \cdots + \frac{1}{\mu_{n+1}({\Omega})} \geq \frac{n}{\mu_2(B_{R_{2}} \setminus \overline{B}_{R_{1}})}.
		\end{align*}
		Whereas in \cite{BBC, WX}, authors proved a similar bound for the harmonic mean of the first $(n-1)$ nonzero Neumann eigenvalues in terms of the first nonzero Neumann eigenvalue of the geodesic ball, without any symmetry assumption. 
	\end{rmk}
	
	The remaining part of this article is organised as follows. Section \ref{Neumann problem on annular domain} is devoted to the study of Neumann eigenvalues and eigenfunctions on an annular domain contained in $M$. This section also includes some relations satisfied by these eigenvalues and eigenfunctions, which are important to prove our main result. We give detailed computation of geodesic normal coordinates on $M ( = \mathbb{S}^{n}$ and $\mathbb{H}^{n})$ and derive some of their properties in Section \ref{Sec:geodesic normal coordinates}. In this section, we also obtain some integral identities, which helps us to find test functions for the variational characterisation of Neumann eigenvalues In Section \ref{sec:main result}, we prove the main result and provide some concluding remarks.
	
	\section{The Neumann problem on an annular domain} \label{Neumann problem on annular domain}
	This section describes the Neumann eigenvalues on an annular domain as the eigenvalues of certain Sturm–Liouville eigenvalue problems. These Sturm–Liouville problems are determined by the eigenvalues of $\Delta_{\mathbb{S}^{n-1}}$.  We also prove some inequalities between the eigenvalues of distinct Sturm–Liouville problems which are essential for proving the main result.
	
	\subsection{The spherical harmonics}
	A spherical harmonic $Y$ on $\mathbb{S}^{n-1}$ of degree $k \geq 0$ is the restriction of  $\Tilde{Y}$, a harmonic homogeneous polynomial of degree $k$ on $\mathbb{R}^{n}$, to $\mathbb{S}^{n-1}$. Let $\mathcal{H}_{k}, k \geq 0$ represent the space of harmonic homogeneous polynomials of degree $k$ on $\mathbb{R}^{n}$. Then dim $\mathcal{H}_{k} = \binom{k+n-1}{n-1} - \binom{k+n-3}{n-1}$. Notice that,
	\begin{align*}
		& \mathcal{H}_{0} = \text{ span } \{1\} \\
		& \mathcal{H}_{1} = \text{ span } \{x_{i} : i \in \{1,\ldots, n\} \\
		& \mathcal{H}_{2} = \text{ span } \{x_{i} x_{j}, x_{1}^{2} - x_{k}^{2} : i,j \in \{1,\ldots, n\} \text{ and } k \in \{2,\ldots, n\}\}.
	\end{align*}
	The following proposition describe the set of eigenvalues and the eigenfunctions of $\Delta_{\mathbb{S}^{n-1}}$ \cite[Sections 22.3, 22.4]{SH}. 
	\begin{prop} \label{multiplicity of eigenvalue}
		The set of all eigenvalues of $\Delta_{\mathbb{S}^{n-1}}$ is $\{k(k+n-2) : k \in \mathbb{N} \cup \{0\} \}$. The eigenfunctions corresponding to each eigenvalue $k(k+n-2)$ are the spherical harmonics of degree $k$ and thus the multiplicity of $k(k+n-2)$ is equal to dim $\mathcal{H}_{k}$.
	\end{prop}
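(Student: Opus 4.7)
My plan is to identify the eigenspaces of $\Delta_{\mathbb{S}^{n-1}}$ with restrictions of harmonic homogeneous polynomials on $\mathbb{R}^n$ via the polar decomposition of the Euclidean Laplacian, and then to rule out extraneous eigenvalues through a polynomial density argument.

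First I would invoke the polar form of the Euclidean Laplacian,
\begin{equation*}
\Delta_{\mathbb{R}^n} \;=\; -\partial_r^2 \;-\; \frac{n-1}{r}\partial_r \;+\; \frac{1}{r^2}\Delta_{\mathbb{S}^{n-1}},
\end{equation*}
using the sign convention that makes $\Delta$ a nonnegative operator, as in the paper. For $\tilde Y\in\mathcal{H}_k$, write $\tilde Y(x)=r^k Y(\omega)$ with $Y=\tilde Y|_{\mathbb{S}^{n-1}}$. A direct substitution yields
\begin{equation*}
0 \;=\; \Delta_{\mathbb{R}^n}\tilde Y \;=\; r^{k-2}\bigl(\Delta_{\mathbb{S}^{n-1}}Y - k(k+n-2)\,Y\bigr),
\end{equation*}
which shows $Y$ is an eigenfunction with eigenvalue $k(k+n-2)$. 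Hence each number $k(k+n-2)$ is an eigenvalue and $\mathcal{H}_k|_{\mathbb{S}^{n-1}}\subseteq\ker(\Delta_{\mathbb{S}^{n-1}}-k(k+n-2))$.

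To see that no other eigenvalues occur and that the above inclusion is in fact an equality, I would prove $\bigoplus_{k\geq 0}\mathcal{H}_k|_{\mathbb{S}^{n-1}}$ is dense in $L^2(\mathbb{S}^{n-1})$. The key algebraic input is the Fischer-type decomposition $\mathcal{P}_k=\mathcal{H}_k\oplus |x|^2\mathcal{P}_{k-2}$, where $\mathcal{P}_k$ denotes the space of degree-$k$ homogeneous polynomials on $\mathbb{R}^n$; iterating this identity shows that every polynomial restricts on $\mathbb{S}^{n-1}$ (where $|x|^2=1$) to an element of $\sum_{j\leq k}\mathcal{H}_j|_{\mathbb{S}^{n-1}}$. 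The Stone--Weierstrass theorem makes polynomial restrictions dense in $C(\mathbb{S}^{n-1})$, and hence in $L^2(\mathbb{S}^{n-1})$. Since $\Delta_{\mathbb{S}^{n-1}}$ is self-adjoint with compact resolvent, eigenspaces attached to distinct eigenvalues are $L^2$-orthogonal; any hypothetical eigenfunction with eigenvalue outside $\{k(k+n-2):k\ge 0\}$ would be orthogonal to the dense subspace $\bigoplus_k\mathcal{H}_k|_{\mathbb{S}^{n-1}}$ and therefore vanish, and the same orthogonality forces the inclusion from the previous paragraph to be an equality, giving the claimed multiplicity.

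The only non-trivial ingredient is the Fischer decomposition $\mathcal{P}_k=\mathcal{H}_k\oplus |x|^2\mathcal{P}_{k-2}$, which I would establish using the Fischer inner product on polynomials: under this inner product, multiplication by $|x|^2$ is adjoint to $\Delta_{\mathbb{R}^n}$, so $\mathcal{H}_k=\ker(\Delta_{\mathbb{R}^n}|_{\mathcal{P}_k})=(|x|^2\mathcal{P}_{k-2})^{\perp}$ inside $\mathcal{P}_k$. Since the full statement is classical and is cited here from \cite{SH}, I would record only the strategy above and defer the algebraic and spectral-theoretic details to that reference rather than reproduce them.
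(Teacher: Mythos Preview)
Your proposal is correct and follows the standard route to this classical result. Note, however, that the paper does not actually prove this proposition: it is stated with a citation to \cite[Sections 22.3, 22.4]{SH} and used as a black box. Your sketch---polar decomposition of $\Delta_{\mathbb{R}^n}$ to exhibit the eigenvalue, Fischer decomposition plus Stone--Weierstrass for completeness---is precisely the argument one finds in that reference, so in effect you have supplied what the paper outsources.
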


	\subsection{The Neumann eigenvalues and eigenfunctions on annular domain}
	Fix $p \in M$, we describe the Neumann eigenvalues and corresponding eigenfunctions on annular domain $(B_{R_{2}} \setminus \overline{B}_{R_{1}}) \subset M$ centered at the point $p$. 
	
	Recall that, the Riemannian metric $g_{M}$ on $M$ in terms of geodesic polar coordinates is of the form $g_{M}(r,\Theta)=dr^2+\sin_{M}^2(r)g_{0}(\Theta)$. Here $(r,\Theta)\in [0,L] \times \mathbb{S}^{n-1}$, $g_0$ is the canonical metric on the $(n-1)$-dimensional unit sphere $\mathbb{S}^{n-1}$ and function $\sin_{M}(r)$ is defined as
	\begin{align*}
		\sin_{M}(r) :=
		\begin{cases}
			\sin r, & M= \mathbb{S}^{n}, \\
			\sinh r, & M= \mathbb{H}^{n}.
		\end{cases}
	\end{align*}
	
	Let $S(r)$ be the geodesic sphere of radius $r$ centered at $p$ and tr$(A(r))$ be the trace of the second fundamental form $A(r).$ Then  $\Delta$ on $M$ can be decomposed in terms of $\Delta_{S(r)}$ as  given below:
	\begin{align*}
		\Delta = - \frac{d^{2}}{dr^{2}} - \text{ tr}(A(r)) \frac{d}{dr} + \Delta_{S(r)}.
	\end{align*}
	For $M= \mathbb{S}^{n}$ and $\mathbb{H}^{n}$, one can verify that 
	\begin{align*}
		\text{ tr(A(r))} = \frac{1}{\sin_{M}^{n-1}(r)} \frac{d}{dr} \left(\sin_{M}^{n-1}(r) \right) \text{ and } \Delta_{S(r)} = \frac{1}{\sin_{M}^{2}(r)} \Delta_{\mathbb{S}^{n-1}}.
	\end{align*}
	Thus for a function $f(r,\Theta) = u(r)v(\Theta)$ defined on $(B_{R_{2}} \setminus B_{R_{1}})$,
	\begin{align*}
		\Delta (u(r)v(\Theta))&= \left( - \frac{d^{2}u}{dr^{2}} - \frac{1}{\sin_{M}^{n-1}(r)} \frac{d}{dr} \left(\sin_{M}^{n-1}(r) \right) \frac{du}{dr} \right) v(\Theta) + \Delta_{S(r)} (u(r)v(\Theta)) \\
		&= -\frac{1}{\sin_{M}^{n-1}(r)} \frac{d}{dr} \left(\sin_{M}^{n-1}(r) \frac{d}{dr}u \right) v(\Theta) +\frac{u}{\sin_{M}^2(r)}\Delta_{\mathbb{S}^{n-1}} v(\Theta).
	\end{align*}
	If $v$ is spherical harmonic of degree $k$, then we obtain
	\begin{align*}
		\Delta (u(r)v(\Theta)) = -\frac{1}{\sin_{M}^{n-1}(r)} \frac{d}{dr} \left(\sin_{M}^{n-1}(r) \frac{d}{dr}u \right)v(\Theta)+\frac{u}{\sin_{M}^2 (r)} k(k+n-2) v(\Theta),
	\end{align*}
	Thus by separation of variable technique, for an eigenpair $(\mu, f)$ of the Neumann problem on $B_{R_{2}} \setminus B_{R_{1}}$, $f$ is of the form  $f(r,\Theta) = u(r)v(\Theta)$, where $v$ is a spherical harmonic corresponding to the eigenvalue $k(k+n-2)$ and $u$ satisfies the following Sturm–Liouville eigenvalue problem:
	\begin{align} \label{eqn:separationofvariable}
		\begin{array} {rcl}
			- u''(r) - \frac{(n-1) \sin_{M}'(r)}{\sin_{M}(r)} u'(r) + \frac{k(k+n-2)}{\sin_{M}^{2}(r)} u(r) = \mu u(r)
		\end{array}
	\end{align}
	with the boundary conditions:
	\begin{equation}\label{BCs}
		u'(R_{1})=0, \quad u'(R_{2})=0.
	\end{equation}
	
	Notice that, for each $k\in \mathbb{N} \cup \{0\},$ the eigenvalues of the above Sturm–Liouville problem form an increasing sequence $0 \leq \mu_{k,1} < \mu_{k,2} < \mu_{k,3} < \cdots \nearrow \infty$ and each eigenvalue $\mu_{k,j}$ has multiplicity one and the corresponding eigenfunction vanishes exactly $(j-1)$ times in the interval $(R_{1}, R_{2})$. For  $k\in \mathbb{N} \cup \{0\},$ and $u\in H^{1}((R_{1}, R_{2}); \sin_{M}^{n-1}(r))\setminus\{0\}$, consider the Rayleigh quotient
	\begin{equation}
		\mathcal{R}_k(u):= \frac{\int_{R_{1}}^{R_{2}}\left( \left( u'(r) \right)^{2} + \frac{k(k+n-2)}{\sin_{M}^{2}(r)} u^{2}(r) \right)\sin_{M}^{n-1}(r) \ dr}{\int_{R_{1}}^{R_{2}} u^{2}(r) \sin_{M}^{n-1}(r) \ dr }. 
	\end{equation}
	Now, a variational characterization of  $\mu_{k,j}$ can be  given as below:
	\begin{align*}
		\mu_{k,j} = \min_{E \in \mathcal{H}_{j}} \max_{u \in E\setminus \{0\}} \mathcal{R}_k(u),
	\end{align*}
	where $\mathcal{H}_{j}$  is the set of all $j$-dimensional subspaces  of $H^{1}\left((R_{1}, R_{2}); \sin_{M}^{n-1}(r)\right)$. 
	Since $\mathcal{R}_{k+1}(u)>\mathcal{R}_k(u)$, for each $j\in \mathbb{N}$, we also observe that 
	\begin{equation}\label{j-increase}
		\mu_{0,j} < \mu_{1,j} < \mu_{2,j} < \cdots \nearrow \infty.
	\end{equation}
	
	\begin{rmk}\label{Dirichlet evs}
		For each $k\in \mathbb{N}\cup\{0\}$, Sturm–Liouville problem \eqref{eqn:separationofvariable} with the boundary conditions  $u(R_1)=u(R_2)=0$  the set of  eigenvalues form  a sequence of the form
		$0 <\lambda_{k,1} < \lambda_{k,2} < \lambda_{k,3} < \cdots \nearrow \infty$.  These eigenvalues have the following variational characterization:
		\begin{align*}
			\lambda_{k,j} = \min_{E \in \mathcal{H}_{j}} \max_{u \in E\setminus \{0\}} \mathcal{R}_k(u) ,
		\end{align*}
		where $\mathcal{X}_{j}$ is the set of all $j$-dimensional subspaces of the Sobolev space $H^{1}_0((R_{1}, R_{2}); \sin_{M}^{n-1}(r))$. For each $k\in \mathbb{N}\cup\{0\}$ and $j\in \mathbb{N},$ from the variational characterizations, we clearly have $\mu_{k,j}\le \lambda_{k,j}.$ Moreover, using the simplicity of eigenvalues, one can show that 
		\begin{equation}\label{compare-evs}
			\mu_{k,j}< \lambda_{k,j}, \forall\, k\in \mathbb{N}\cup\{0\}, \forall\, j\in \mathbb{N}.
		\end{equation}
	\end{rmk}
	
	\begin{rmk}
		If $\lambda_{i}(S(r))$ denote the $i^{th}$ eigenvalue of $\Delta_{S(r)}$, then $\lambda_{0}(S(r)) = 0$ and $\lambda_{1}(S(r)) = \frac{n-1}{\sin_{M}^{2}(r)}$. Also observe that $(\text{tr}(A(r)))' = - \lambda_{1}(S(r))$.
	\end{rmk}
	The observations in the above remarks lead to the following lemma:
	\begin{lem}
		\label{eigenvalue comparison} For $j\in \mathbb{N},$
		$\mu_{0,j+1} = \lambda_{1,j}$ and hence $\mu_{1,j}<\mu_{0,j+1}.$
	\end{lem}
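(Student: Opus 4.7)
The plan is to exhibit an explicit intertwining between the $k=0$ Neumann SL problem and the $k=1$ Dirichlet SL problem via the differentiation map $u\mapsto u'$. Denote the SL operator by
\begin{equation*}
L_k u := -u''(r) - \frac{(n-1)\sin_M'(r)}{\sin_M(r)} u'(r) + \frac{k(k+n-2)}{\sin_M^2(r)} u(r).
\end{equation*}
The key computation is that $\left(\sin_M'(r)/\sin_M(r)\right)' = -1/\sin_M^2(r)$ in both cases, since $(\cot r)'=-\csc^2 r$ and $(\coth r)'=-\operatorname{csch}^2 r$. Hence differentiating the identity $L_0 u = \mu u$ once in $r$ and setting $v := u'$ yields exactly $L_1 v = \mu v$. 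The Neumann boundary conditions $u'(R_1)=u'(R_2)=0$ translate directly to Dirichlet conditions $v(R_1)=v(R_2)=0$, so $u\mapsto u'$ sends each non-constant Neumann eigenfunction of $L_0$ to a Dirichlet eigenfunction of $L_1$, preserving the eigenvalue.

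For the reverse direction, I would start from a Dirichlet eigenfunction $v$ of $L_1$ with eigenvalue $\lambda>0$ and define
\begin{equation*}
u(r) := -\frac{v'(R_1)}{\lambda} + \int_{R_1}^{r} v(s)\, ds.
\end{equation*}
Then $u'=v$, so $u'(R_1)=u'(R_2)=0$ automatically. A short verification, differentiating $L_0 u = \lambda u$ and noting the relation reduces to $L_1 v = \lambda v$ with matching initial data at $r=R_1$ (which is precisely how the additive constant $-v'(R_1)/\lambda$ was chosen), shows $u$ solves $L_0 u = \lambda u$. This provides the inverse correspondence.

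Since each eigenvalue in each SL problem is simple (stated in the paragraph preceding the lemma), this establishes a bijection between the positive Neumann eigenvalues $\{\mu_{0,j}\}_{j\ge 2}$ of $L_0$ and all Dirichlet eigenvalues $\{\lambda_{1,j}\}_{j\ge 1}$ of $L_1$, preserving the value. Both sequences are strictly increasing, so comparing term by term yields $\mu_{0,j+1}=\lambda_{1,j}$ for every $j\in\mathbb{N}$. The second conclusion $\mu_{1,j}<\mu_{0,j+1}$ then follows immediately by combining this identification with the strict Neumann–Dirichlet comparison \eqref{compare-evs} applied at $k=1$, namely $\mu_{1,j}<\lambda_{1,j}=\mu_{0,j+1}$.

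The only mildly delicate step is the reverse correspondence: one must pick the integration constant so that not only the ODE holds in the interior but the correct eigenvalue equation is satisfied pointwise at the endpoint. Once the formula $-v'(R_1)/\lambda$ is written down the check is one line, so no obstacle of substance remains.
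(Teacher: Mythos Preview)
Your argument is correct, and the forward intertwining step---differentiating the $L_0$ Neumann eigenfunction to obtain an $L_1$ Dirichlet eigenfunction via the identity $(\sin_M'/\sin_M)'=-1/\sin_M^2$---is exactly what the paper does. Where you diverge is in identifying the index: the paper never constructs the inverse map. Instead, having shown $\mu_{0,j+1}\in\{\lambda_{1,i}:i\in\mathbb{N}\}$, it combines \eqref{j-increase} and \eqref{compare-evs} to get $\mu_{0,j+1}<\mu_{1,j+1}<\lambda_{1,j+1}$, which rules out $\lambda_{1,i}$ for $i\ge j+1$; an induction starting from $\mu_{0,2}=\lambda_{1,1}$ then forces $\mu_{0,j+1}=\lambda_{1,j}$. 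Your explicit antiderivative $u(r)=-v'(R_1)/\lambda+\int_{R_1}^r v$ gives a direct bijection between the two spectra and is arguably cleaner, bypassing the inductive step and making the correspondence visibly invertible; the paper's route is more economical in that it only needs the one-line differentiation and then recycles inequalities already recorded in the text. Either way the final clause $\mu_{1,j}<\mu_{0,j+1}$ is immediate from \eqref{compare-evs}.
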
 
	
	\begin{proof}
		Let $f$ be an eigenfunction corresponding to the eigenvalue $\mu_{0,j+1}$ for some $j\in \mathbb{N}.$ Thus $f$ satisfy the following Sturm–Liouville problem:
		\begin{align*} \label{f equation}
			- \frac{d^{2}f}{dr^{2}} - \text{ tr}(A(r)) \frac{df}{dr} & =\mu_{0,j+1} f, \\
			f'(R_1)=0 \, & f'(R_2)=0.
		\end{align*}
		By differentiating the above equation and using fact that $(\text{tr}(A(r)))' = - \lambda_{1}(S(r))$, we obtain,
		\begin{align*}
			- \frac{d^{2}f'(r)}{dr^{2}} - \text{ tr}(A(r)) \frac{df'(r)}{dr} + \lambda_{1}(S(r)) f'(r) = \mu_{0,j+1} f'(r).
		\end{align*}
		Therefore, $u=f'$ is an eigenfunction corresponding to the eigenvalue $\mu_{0,j+1}$ of  \eqref{eqn:separationofvariable} with  $k=1$ and satisfies the  boundary conditions $u(R_1)=u(R_2)=0$. Thus $\mu_{0,j+1}\in \{\lambda_{1,i}: i\in \mathbb{N} \}.$
		From \eqref{j-increase} and \eqref{compare-evs}, also have
		$$\mu_{0,j+1}<\mu_{1,j+1}<\lambda_{1,j+1}.$$
		Thus, we must have $\mu_{0,2}=\lambda_{1,1}$ and subsequently  $\mu_{0,i+1} = \lambda_{1,i}$ for $i=2,3,\ldots,j-1.$
		
	\end{proof}
	The following lemma plays an integral part in proving our main theorem. For an analogue of this lemma for the Euclidean domains, see \cite[Lemma 2.7]{ABD}.
	\begin{lem} \label{lem: increasing eigenfunction}
		For $k\in \mathbb{N},$ let $u(r)$ be the positive eigenfunction corresponding to the eigenvalue $\mu_{k,1}$ of the Sturm–Liouville problem \eqref{eqn:separationofvariable} with the boundary conditions \eqref{BCs}. Then 
		\begin{enumerate}[label = (\roman*)]
			\item $\mu_{k,1} = \frac{k(k+n-2)}{\sin_{M}^{2}(b)} \text{ for some } b \in (R_{1}, R_{2}),$
			\item $u$ is strictly  increasing  on $(R_{1}, R_{2}),$
			\item for each $r\in (R_{1}, R_{2}),$  
			\begin{equation} \label{comparison ineq.}
				\left( \frac{k(k+n-2)}{\sin_{M}^{2}(r)} - \mu_{k,1} \right) u^{2}(r) \geq \left( \frac{k(k+n-2)}{\sin_{M}^{2}(R_{2})} - \mu_{k,1} \right) u^{2}(R_{2}).
			\end{equation}
		\end{enumerate}
		
	\end{lem}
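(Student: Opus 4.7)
Set $g(r):=k(k+n-2)/\sin_M^{2}(r)$, so that the SL equation reads
\[
-\frac{1}{\sin_M^{n-1}(r)}\bigl(\sin_M^{n-1}(r)\,u'(r)\bigr)' = \bigl(\mu_{k,1}-g(r)\bigr)u(r).
\]
Since $R_2\le \pi/2$ in the spherical case and $r>0$ throughout, $\sin_M$ is strictly increasing on $(R_1,R_2)$; hence $g$ is strictly decreasing there. This monotonicity of $g$ will be the single analytic fact driving all three assertions.

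For (i), I would multiply the above equation by $\sin_M^{n-1}(r)$ and integrate over $(R_1,R_2)$. The left-hand side integrates to $-[\sin_M^{n-1}u']_{R_1}^{R_2}=0$ by the Neumann boundary conditions \eqref{BCs}, so
\[
\int_{R_1}^{R_2}\!\bigl(\mu_{k,1}-g(r)\bigr)u(r)\sin_M^{n-1}(r)\,dr=0.
\]
Because $u>0$, the continuous function $\mu_{k,1}-g$ must change sign, giving some $b\in(R_1,R_2)$ with $g(b)=\mu_{k,1}$. Strict monotonicity of $g$ makes $b$ unique, and pins down the signs: $g>\mu_{k,1}$ on $(R_1,b)$, $g<\mu_{k,1}$ on $(b,R_2)$.

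For (ii), I would introduce $v(r):=\sin_M^{n-1}(r)\,u'(r)$, which satisfies $v(R_1)=v(R_2)=0$ and $v'(r)=-\sin_M^{n-1}(r)\bigl(\mu_{k,1}-g(r)\bigr)u(r)$. From the sign analysis in (i), $v'>0$ on $(R_1,b)$ and $v'<0$ on $(b,R_2)$. Combined with the vanishing at the endpoints, this forces $v>0$ on the whole interval $(R_1,R_2)$, so $u'>0$ there, giving strict monotonicity of $u$.

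For (iii), the natural object is $F(r):=\bigl(g(r)-\mu_{k,1}\bigr)u^{2}(r)$. Notice that $F(b)=0$, $F>0$ on $(R_1,b)$, and $F<0$ on $(b,R_2)$; in particular $F(r)\ge 0>F(R_2)$ for every $r\in(R_1,b]$, which is immediate. The substantive case is $r\in(b,R_2)$, and there I would show that $F$ is strictly decreasing by computing
\[
F'(r)=g'(r)u^{2}(r)+2\bigl(g(r)-\mu_{k,1}\bigr)u(r)u'(r),
\]
and observing that on $(b,R_2)$ we have $g'<0$, $g-\mu_{k,1}<0$, $u>0$, and $u'>0$ by (ii); hence $F'(r)<0$. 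Therefore $F(r)>F(R_2)$ on $(b,R_2)$, finishing the inequality \eqref{comparison ineq.}. The only mildly delicate point I anticipate is making sure $g$ really is decreasing on the whole interval $(R_1,R_2)$ in the spherical case, which is where the hypothesis $R_2\le\pi/2$ from \ref{assumption1} is essential; everything else reduces to sign bookkeeping driven by the location of $b$.
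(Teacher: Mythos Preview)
Your proof is correct and follows essentially the same approach as the paper: introduce $\Phi(r)=\sin_M^{n-1}(r)u'(r)$ (your $v$), locate $b$ where $g(b)=\mu_{k,1}$, deduce the sign pattern of $\Phi'$ to get $u'>0$, and then split (iii) into the trivial range $(R_1,b]$ and the range $(b,R_2)$. The only cosmetic differences are that the paper obtains $b$ in (i) via Rolle's theorem applied to $\Phi$ (rather than integrating and invoking a sign change), and in (iii) the paper argues directly from $0>g(r)-\mu_{k,1}>g(R_2)-\mu_{k,1}$ together with $0<u(r)<u(R_2)$ instead of differentiating $F$; both routes are equivalent.
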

	\begin{proof} 
		\noindent $(i)\,$ Notice that  $u$ satisfies
		\begin{align} \label{eqn:SLmuk}
			\frac{d}{dr} \left(\sin_{M}^{n-1}(r) \frac{d}{dr}u(r) \right) &= \left( \frac{k(k+n-2)}{\sin_{M}^{2}(r)} - \mu_{k,1} \right) u(r) \sin_{M}^{n-1}(r), r\in (R_1,R_2),\\
			u'(R_{1})=0, \; & u'(R_{2})=0. \nonumber
		\end{align}
		Let $\Phi(r)= \sin_{M}^{n-1}(r) u'(r)$. Then $\Phi(R_1) = 0 = \Phi(R_2)$.  Thus there exists $b \in (R_{1}, R_{2})$ such that $\Phi'(b)= 0$ and hence \eqref{eqn:SLmuk} yields $\mu_{k,1} = \frac{k(k+n-2)}{\sin_{M}^{2}(b)}$.
		
		\vspace{5mm}
		\noindent $(ii)\,$ Since $\sin_{M}^{n-1}(r) u(r)>0,$ from \eqref{eqn:SLmuk} we obtain
		\begin{align*}
			\Phi '(r)> 0 \text{ for } r \in (R_1,b), \text{ and } 
			&\Phi '(r)< 0 \text{ for } r \in (b,R_2),
		\end{align*}
		where  $b$ as given in $(i).$
		Now, as $\Phi(R_1) = 0 = \Phi(R_2)$, we easily conclude that  $\Phi(r)>0$ for every $r\in (R_1,R_2).$ Thus $u'(r)>0$ for  $r\in (R_1,R_2)$ as required. 
		
		\vspace{5mm}
		\noindent $(iii)\,$ For $r\in (R_1,b],$  \eqref{comparison ineq.} holds trivially. For  $r \in (b, R_{2}),$  using $(i),$ we get 
		\begin{align*}
			0> \frac{k(k+n-2)}{\sin_{M}^{2}(r)} - \mu_{k,1} >\frac{k(k+n-2)}{\sin_{M}^{2}(R_{2})} - \mu_{k,1}.
		\end{align*}
		Since $u$ is strictly increasing and positive, the above inequality yields  \eqref{comparison ineq.} for $r \in (b, R_{2}).$
	\end{proof}

	\begin{rmk} \label{rmk: multiplicity of annular eigenvalues}
		The set of Neumann eigenvalues of $-\Delta$ on $(B_{R_{2}}\setminus \overline{B}_{R_{1}})$ is given by
		\begin{align*}
			\{ \mu_{i} (B_{R_{2}} \setminus \overline{B}_{R_{1}}) \} _{i \in \mathbb{N}} = \{ \mu_{k,j} \} _{k \in \mathbb{N} \cup \{ 0 \} ,j \in \mathbb{N}}.
		\end{align*}
		Therefore, $\mu_1(B_{R_{2}} \setminus \overline{B}_{R_{1}})=\mu_{0,1}=0$ and by  Lemma \ref{eigenvalue comparison} and Proposition \ref{multiplicity of eigenvalue}, we conclude that 
		$$\mu_2(B_{R_{2}} \setminus \overline{B}_{R_{1}}) = \mu_3(B_{R_{2}} \setminus \overline{B}_{R_{1}}) = \cdots = \mu_{n+1}(B_{R_{2}} \setminus \overline{B}_{R_{1}}) = \min\{\mu_{0,2},\mu_{1,1}\}=\mu_{1,1}.$$ The corresponding eigenfunctions are $u(r) \frac{X_i}{r}$, $1 \leq i \leq n$, where $(X_1, X_2,\ldots, X_n)$ is a geodesic normal coordinates centered at the point $p$ and $u(r)$ is  an eigenfunction  corresponding to the eigenvalue  $\mu_{1,1}$ of   \eqref{eqn:separationofvariable} with boundary conditions \eqref{BCs}.
	\end{rmk}
	
	A proof for the next proposition can be given using the above lemma and similar arguments as given in the proof of Proposition 2.8 of \cite{ABD}. 
	\begin{prop} \label{prop:comparison with eigenvalue}
		Let $\Omega, R_1$ and $R_2$ be as given in \ref{assumption1} and \ref{assumption2}. For $k \in \mathbb{N},$ let $u_k$ be a positive eigenfunction of \eqref{eqn:separationofvariable} and \eqref{BCs} corresponding to the eigenvalue $\mu_{k,1}.$ Then
		\begin{align} \label{comparison with eigenvalue}
			\frac{\int_{\Omega} \left( (G'_{k}(r))^{2} + \frac{k(k+n-2)}{\sin_{M}^{2}(r)} G_{k}^{2}(r) \right) dV}{\int_{\Omega}G_{k}^{2}(r) \ dV} \leq \mu_{k,1},
		\end{align}
		where
		\begin{align*}
			G_{k}(r) = 
			\begin{cases}
				u_k(r), & \text{ if } r \in (R_{1}, R_{2}), \\
				u_k(R_{2}), & \text{ if } r \geq R_{2}.
			\end{cases}
		\end{align*}
		Furthermore, equality holds in \eqref{comparison with eigenvalue} if and only if $\Omega$ coincides with $B_{R_{2}} \setminus B_{R_{1}}$.
	\end{prop}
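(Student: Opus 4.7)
The plan is to compare $\Omega$ with the concentric annulus $A := B_{R_2}\setminus\overline{B}_{R_1}$, on which $u_k$ is a genuine eigenfunction, and to exploit the volume equality $|\Omega|=|A|$ from \ref{assumption2}. Writing
$$H(r) := \bigl(G_k'(r)\bigr)^2 + \left(\frac{k(k+n-2)}{\sin_M^2(r)}-\mu_{k,1}\right)G_k^2(r),$$
the desired inequality \eqref{comparison with eigenvalue} is equivalent to $\int_\Omega H\,dV \le 0$. Multiplying the SL equation \eqref{eqn:SLmuk} by $u_k$, integrating against $\sin_M^{n-1}\,dr$ over $(R_1,R_2)$, and using \eqref{BCs} to kill the boundary terms gives $\int_A H\,dV=0$; combining this with $|\Omega|=|A|$ reduces the claim to
$$\int_{\Omega\setminus A}H\,dV \;\le\; \int_{A\setminus\Omega}H\,dV.$$

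The heart of the argument is to bracket $H$ pointwise on these two symmetric-difference regions by the common constant
$$C_0 := \left(\frac{k(k+n-2)}{\sin_M^2(R_2)}-\mu_{k,1}\right)u_k^2(R_2).$$
On $A\setminus\Omega$, where $r\in(R_1,R_2)$, discarding the nonnegative kinetic term and invoking Lemma \ref{lem: increasing eigenfunction}(iii) directly gives $H\ge C_0$. On $\Omega\setminus A$, assumption \ref{assumption1} together with $B_{R_1}\subset\Omega_{in}$ forces $r\ge R_2$, so $G_k\equiv u_k(R_2)$ and $G_k'\equiv 0$; Lemma \ref{lem: increasing eigenfunction}(i) gives $\mu_{k,1}>\tfrac{k(k+n-2)}{\sin_M^2(R_2)}$, and the monotonicity of $\sin_M^2$ on the relevant range of $r$ — automatic on $\mathbb{H}^n$, and ensured on $\mathbb{S}^n$ by the $\pi/2$-restriction built into \ref{assumption1} — yields $H\le C_0$ there. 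Since $|\Omega\setminus A|=|A\setminus\Omega|$, the two pointwise bounds integrate to the desired inequality.

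For the equality case, Lemma \ref{lem: increasing eigenfunction}(ii) asserts that $u_k'>0$ strictly on $(R_1,R_2)$, so the bound $H\ge C_0$ on $A\setminus\Omega$ is in fact strict wherever $A\setminus\Omega$ has positive measure. Equality in \eqref{comparison with eigenvalue} therefore forces $|A\setminus\Omega|=0$, whence $\Omega$ coincides with $B_{R_2}\setminus\overline{B}_{R_1}$ up to a null set. The main delicacy I anticipate is sign bookkeeping: $C_0$ is negative, the coefficient $\tfrac{k(k+n-2)}{\sin_M^2(r)}-\mu_{k,1}$ changes sign across $r=b$ inside $A$, and the two pointwise bounds point in opposite directions — so one must be careful that the monotonicity of $\sin_M^2$ (and hence the role of \ref{assumption1} on the sphere) is exploited in the correct sense on each region, so that the inequalities combine properly with the equal-volume comparison.
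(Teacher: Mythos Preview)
Your proposal is correct and follows essentially the same approach that the paper indicates (via its reference to \cite[Proposition~2.8]{ABD}): rewrite the inequality as $\int_\Omega H\,dV\le 0$, use that $\int_A H\,dV=0$ on the annulus, and then compare $H$ on $\Omega\setminus A$ and $A\setminus\Omega$ against the common constant $C_0$ via Lemma~\ref{lem: increasing eigenfunction} together with the equal-volume condition. One small expository point: the reduction to $\int_{\Omega\setminus A}H\le\int_{A\setminus\Omega}H$ uses only $\int_A H=0$, not yet $|\Omega|=|A|$; the volume equality enters only when you integrate the pointwise bounds $H\le C_0$ and $H\ge C_0$ over the two pieces of the symmetric difference.
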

	\section{Geodesic normal coordinates on Sphere and hyperbolic space} \label{Sec:geodesic normal coordinates}
	
	In this section, we will construct the geodesic normal coordinates on $M$ that satisfy some properties similar to that of standard Euclidean coordinates.
	
	\subsection{ Geodesic normal coordinates on the unit $n$-sphere $\mathbb{S}^{n}$}
	We consider the following parametrization for $q=(q_1,q_2,\ldots, q_{n+1})\in \mathbb{S}^{n}:$ 
	\begin{align*}
		q_{1} &= \cos (\phi_1),\; q_{2} = \sin (\phi_1) \cos (\phi_2),\ldots, q_{n} = \sin (\phi_1) \sin (\phi_2) \sin (\phi_3) \cdots \cos (\phi_{n}) \\
		q_{n+1}&= \sin (\phi_1) \sin (\phi_2) \sin (\phi_3) \cdots \sin (\phi_{n}).
	\end{align*}
	with $ \phi_1, \phi_2,\ldots,\phi_{n-1} \in [0, \pi] $ and $\phi_{n} \in [0, 2 \pi] $. Then the tangent space $T_{q}\mathbb{S}^{n}$ at any point $q \in \mathbb{S}^{n}$ is spanned by $\frac{\partial}{\partial \phi_1} \big|_{q}, \frac{\partial}{\partial \phi_2}\big|_{q}, \ldots, \frac{\partial}{\partial \phi_n}\big|_{q}$, where \\
	\begin{align*}
		& \frac{\partial}{\partial \phi_1}\bigg|_{q} = \left( - \sin (\phi_1), \cos (\phi_1) \cos (\phi_2), \ldots, \cos (\phi_1) \sin (\phi_2) \sin (\phi_3) \cdots \sin (\phi_{n}) \right), \\
		& \frac{\partial}{\partial \phi_2}\bigg|_{q} = \left( 0, - \sin (\phi_1) \sin (\phi_2), \ldots, \sin (\phi_1) \cos (\phi_2) \sin (\phi_3) \cdots \sin (\phi_{n}) \right), \\
		& \hspace{4cm}\vdots \\
		& \frac{\partial}{\partial \phi_n}\bigg|_{q} = \left( 0, 0,  \ldots,0, - \sin (\phi_1) \sin (\phi_2) \sin (\phi_3) \cdots \sin (\phi_{n}), \sin (\phi_1) \sin (\phi_2) \sin (\phi_3) \cdots \cos (\phi_{n}) \right).
	\end{align*}
	This gives the Riemannian metric on $\mathbb{S}^{n}$, 
	\begin{align*}
		g_{\mathbb{S}^{n}} &= {d \phi_1}^{2} + \sin^{2} (\phi_1) {d \phi_2}^{2} +  \cdots + \sin^{2} (\phi_1) \sin^{2} (\phi_2) \cdots \sin^{2} (\phi_{n-1})  {d \phi_n}^{2}.
	\end{align*}
	For $\textbf{v}, \textbf{w} \in T_{p}\mathbb{S}^{n}$, we denote $g_{\mathbb{S}^{n}}(p) (\textbf{v},\textbf{w})$ and   $g_{\mathbb{S}^{n}}(p) (\textbf{v},\textbf{v})$   by $\langle \textbf{v}, \textbf{w} \rangle_{p}$ and $\|\textbf{v}\|_{p}$, respectively.
	
	Next we find a geodesic normal coordinates of a point $q \in \mathbb{S}^{n}$ centered at $p$.  Let $\{e_{i}\}_{i=1}^{n+1}$ be the standard orthonormal basis of $\mathbb{R}^{n+1}$. For any $q \in \mathbb{S}^{n}$, there exist $\textbf{v} \in  T_{p}\mathbb{S}^{n}$, $\|\textbf{v}\|_{p} = 1$ and $t \in \mathbb{R}^{+} \cup \{0\}$ such that
	\begin{align} \label{exp map on sphere}
		q = \exp_{p} t\textbf{v} = p \cos t + \textbf{v} \sin t. 
	\end{align}
	Now from the parametric representation of $q$, we easily get  $$t = \phi_1\;  \text{ and  }\textbf{v} = \left( 0, \cos (\phi_2), \sin (\phi_2) \cos (\phi_3),\ldots, \sin (\phi_2) \sin (\phi_3) \cdots \sin (\phi_n) \right).$$
	Therefore,
	with respect to the orthonormal  basis  $\left( e_{2}, e_{3}, \ldots, e_{n+1} \right)$ of $T_{p}\mathbb{S}^{n}$ at $p$ the geodesic normal coordinates
	of $q$   are given by 
	\begin{equation}\label{Geodesic normal coordinates on sphere}
		X_i(q)= t\langle \textbf{v}, e_{i+1} \rangle_{p}=\left\{\begin{array}{cc}
			\phi_1\cos (\phi_2) & i=1, \\
			\phi_1\sin (\phi_2) \cdots \sin (\phi_i) \cos (\phi_{i+1}) & \quad 2 \leq i \leq n-1,  \\
			\phi_1\sin (\phi_2) \cdots \sin (\phi_3) \sin (\phi_{n}) & i=n.
		\end{array}\right.
	\end{equation}
	
	


	\subsection{Geodesic normal coordinates on hyperbolic space}
	For $n$-dimensional hyperbolic space $\mathbb{H}^{n}$, consider the hyperboloid model $$\{(x_{1}, x_{2}, \ldots, x_{n+1}) \in \mathbb{R}^{n+1} | -x_{1}^{2} + x_{2}^{2} + \cdots + x_{n+1}^{2} = -1, x_{1} > 0\}$$ with the Riemannian metric $$ ds^{2} = -dx_{1}^{2} + dx_{2}^{2} + \cdots + dx_{n+1}^{2}.$$ For $q=(q_1,q_2,\ldots, q_{n+1})\in \mathbb{H}_n,$ consider the following parametrization: 
	\begin{align*}
		q_1&= \cosh (\phi_1),\; q_2=\sinh (\phi_1) \cos (\phi_2), \ldots, \\
		q_n &= \sinh (\phi_1) \sin (\phi_2) \sin (\phi_3) \cdots \cos (\phi_{n}), \;
		q_{n+1}= \sinh (\phi_1) \sin (\phi_2) \sin (\phi_3) \cdots \sin (\phi_{n}),
	\end{align*}
	where $\phi_1 \in [0, \infty)$,  $\phi_2,\ldots,\phi_{n-1} \in [0, \pi] $ and $\phi_{n} \in [0, 2 \pi].$
	Then its Riemannian metric will take form
	\begin{align*}
		ds^{2}   &= {d \phi_1}^{2} + \sinh^{2} (\phi_1) {d \phi_2}^{2} + \sinh^{2} (\phi_1) \sin^{2} (\phi_2) {d \phi_3}^{2} + \cdots + \sinh^{2} (\phi_1) \sin^{2} (\phi_2) \cdots \\
		& \quad \cdots \sin^{2} (\phi_{n-1})  {d \phi_n}^{2}.
	\end{align*}
	Now,  any  $q \in \mathbb{H}^{n}$ can be represented as
	\begin{align*}
		q = \exp_{p} t\textbf{v} = p \cosh t + \textbf{v} \sinh t,
	\end{align*}
	for some $\textbf{v} \in T_{p}\mathbb{H}^{n}$, $ds^{2}(\textbf{v}, \textbf{v})_{p} = 1$ and $t \in \mathbb{R}^{+} \cup \{0\}$.
	Thus, by the parametric representation of $q$, as before we easily get  $$t = \phi_1\;  \text{ and  }\textbf{v} = \left( 0, \cos (\phi_2), \sin (\phi_2) \cos (\phi_3),\ldots, \sin (\phi_2) \sin (\phi_3) \cdots \sin (\phi_n) \right).$$
	Therefore,  with respect to the orthonormal  basis  $\left( e_{2}, e_{3}, \ldots, e_{n+1} \right)$ of $T_{p}\mathbb{H}^{n}$, the geodesic normal coordinates of  $q \in \mathbb{H}^{n}$ centered at $p$  also have the same expression as given in \eqref{Geodesic normal coordinates on sphere}.

	
	\begin{rmk} \label{relation between r and phi_1}
		For $q \in M,$ let $X_{i}(q), 1 \leq i \leq n$ be the geodesic normal coordinates of $q.$ If  $q = \exp_{p}(\textbf{v})$ for some $\textbf{v} \in T_{p}M$, then the geodesic  distance $r_p(q)$ of  $q$ from $p$ is $\norm{v}_p.$  More over,
		$$ \sum_{i=1}^{n} X_{i}^{2}(q) = \langle \textbf{v}, \textbf{v} \rangle_{p}=  \phi_{1}(q).$$  Thus  $r_{p}(q) =  \sum_{i=1}^{n} X_{i}^{2}(q) = \phi_{1}(q)$. Therefore,  we  use  $r$ interchangeably with   $\phi_{1}$.
	\end{rmk}
	
	\begin{rmk}
		We can also define the notion of rotation on  $M$ via exponential map as follows:   for $q \in M$ and a rotation $R$ on $T_p M,$
		\begin{align*}
			R (q) := \exp_{p}\left(R \left(\exp_{p}^{-1}(q)\right)\right).
		\end{align*}
		In particular,   in terms of the geodesic normal coordinates, using \eqref{Geodesic normal coordinates on sphere} we get
		\begin{align} \label{change in coordinates}
			\begin{array}{rcll}
				R_{i,j}^{\frac{2 \pi}{2}} \left(X_{1}, \ldots, X_{i}, \ldots,  X_{j}, \ldots, X_{n}\right) = \left(X_{1}, \ldots, -X_{i}, \ldots,  -X_{j}, \ldots, X_{n}\right) \\
				R_{i,j}^{\frac{2 \pi}{4}} \left(X_{1}, \ldots, X_{i}, \ldots,  X_{j}, \ldots, X_{n}\right) = \left(X_{1}, \ldots, -X_{j}, \ldots,  X_{i}, \ldots, X_{n}\right).
			\end{array}
		\end{align}
	\end{rmk}
	
	Next, we compute the gradient of the geodesic normal coordinates \eqref{Geodesic normal coordinates on sphere}. This result is used in the proof of the main result.
	
	\begin{lem} \label{lem: gradient on sphere and hyperbolic space}
		For  $M=\mathbb{S}^n$ or $M=\mathbb{H}^n,$ let $\{X_i:i=1,2,\ldots, n \}$ be as given in  \eqref{Geodesic normal coordinates on sphere}. Let  $L_M \leq \frac{\pi}{2}$, if $M=\mathbb{S}^n$. Then for any smooth function $ g: [0,L_M) \rightarrow \mathbb{R}$ and for $1 \leq  i <j \leq n$, the followings hold:
		\begin{align} \label{grad:inner product}
			\big \langle \nabla \left( g(r) X_i \right), \nabla \left( g(r) X_j \right)  \big \rangle = \frac{\left(r g'(r) + g(r) \right)^{2}}{r^{2}} X_{i} X_{j} - \frac{g^{2}(r)}{\sin_{M}^{2}(r)}  X_{i} X_{j},
		\end{align}
		\begin{align} \label{grad:norm}
			\bigg | \nabla \left( g(r) \frac{X_i}{r} \right)\bigg |^{2} = \left( g'(r)\right)^{2} \left(\frac{X_i}{r}\right)^{2} + \frac{g^{2}(r)}{\sin_{M}^{2}(r)} \left( 1 - \left(\frac{X_i}{r}\right)^{2} \right).
		\end{align}
		
	\end{lem}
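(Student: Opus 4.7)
The plan is to reduce both identities to a computation in geodesic polar coordinates, separating the radial and angular contributions. Write $X_{i} = r\,\omega_{i}$ with $\omega_{i} := X_{i}/r$. From \eqref{Geodesic normal coordinates on sphere} and Remark \ref{relation between r and phi_1}, each $\omega_{i}$ depends only on the angular variables $\phi_{2},\ldots,\phi_{n}$, and together the $\omega_{i}$ furnish the standard spherical parametrization of the unit sphere $\mathbb{S}^{n-1}\subset\mathbb{R}^{n}$. In particular, $\omega_{i}$ is the restriction to $\mathbb{S}^{n-1}$ of the Euclidean coordinate function $x\mapsto x_{i}$, so $\sum_{i=1}^{n}\omega_{i}^{2}=1$.

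Using the polar decomposition $g_{M} = dr^{2} + \sin_{M}^{2}(r)\,g_{0}$ recalled at the start of Section \ref{Neumann problem on annular domain}, with $g_{0}$ the canonical round metric on $\mathbb{S}^{n-1}$, the inverse metric splits block-diagonally. Hence for smooth functions $F,H$ on $M\setminus\{p\}$,
\begin{equation*}
\langle \nabla F,\nabla H\rangle \;=\; \partial_{r}F\cdot\partial_{r}H \;+\; \frac{1}{\sin_{M}^{2}(r)}\,\langle \nabla_{\mathbb{S}^{n-1}} F,\nabla_{\mathbb{S}^{n-1}} H\rangle_{g_{0}},
\end{equation*}
where $\nabla_{\mathbb{S}^{n-1}}$ denotes the gradient with respect to $g_{0}$ at fixed $r$. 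The hypothesis $L_{M}\le\pi/2$ in the spherical case ensures $\sin_{M}(r)>0$, so this formula applies throughout the domain of interest.

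The key angular identity is
\begin{equation*}
\langle \nabla_{\mathbb{S}^{n-1}}\omega_{i},\,\nabla_{\mathbb{S}^{n-1}}\omega_{j}\rangle_{g_{0}} \;=\; \delta_{ij} - \omega_{i}\omega_{j},
\end{equation*}
which follows because $\nabla_{\mathbb{S}^{n-1}}\omega_{i}$ is the tangential projection of the constant Euclidean vector $e_{i}\in\mathbb{R}^{n}$ onto $T_{\omega}\mathbb{S}^{n-1}$, namely $e_{i}-\omega_{i}\omega$; taking the Euclidean inner product of two such tangential projections produces $\delta_{ij}-\omega_{i}\omega_{j}$.

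With these ingredients in hand both identities reduce to bookkeeping. For \eqref{grad:inner product}, apply the decomposition to $F = g(r)X_{i} = rg(r)\omega_{i}$ and $H = g(r)X_{j} = rg(r)\omega_{j}$: the radial term contributes $(rg'+g)^{2}\omega_{i}\omega_{j} = \frac{(rg'+g)^{2}}{r^{2}}X_{i}X_{j}$, while the angular term contributes $\frac{r^{2}g^{2}}{\sin_{M}^{2}(r)}(\delta_{ij}-\omega_{i}\omega_{j})$; since $i<j$ kills the Kronecker delta, summing the two yields the stated formula. For \eqref{grad:norm}, take $F = g(r)\omega_{i}$: the radial piece is $(g'(r))^{2}\omega_{i}^{2} = (g'(r))^{2}(X_{i}/r)^{2}$ and the angular piece is $\frac{g^{2}}{\sin_{M}^{2}(r)}(1-\omega_{i}^{2})$, giving the right-hand side of \eqref{grad:norm}. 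The only nontrivial point is the identification of the $\omega_{i}$ with restrictions of linear Euclidean coordinates to $\mathbb{S}^{n-1}$; every other step is a direct computation.
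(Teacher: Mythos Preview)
Your argument is correct and complete. The identification of $\omega_{i}=X_{i}/r$ with the restriction of the $i$-th Euclidean coordinate to $\mathbb{S}^{n-1}$, and the resulting identity $\langle \nabla_{\mathbb{S}^{n-1}}\omega_{i},\nabla_{\mathbb{S}^{n-1}}\omega_{j}\rangle_{g_{0}}=\delta_{ij}-\omega_{i}\omega_{j}$, are exactly what is needed; once these are in place both \eqref{grad:inner product} and \eqref{grad:norm} drop out of the block-diagonal polar decomposition of the metric.

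Your route differs from the paper's. The paper computes $\nabla(g(\phi_{1})X_{i})$ explicitly in the coordinate frame $\partial/\partial\phi_{1},\ldots,\partial/\partial\phi_{n}$, writing out each component and then pairing them with the metric coefficients; the resulting angular sum telescopes down (via $\frac{\cos^{2}\phi_{k}}{\sin^{2}\phi_{k}}-\frac{1}{\sin^{2}\phi_{k}}=-1$ applied repeatedly) to the single term $-g^{2}X_{i}X_{j}/\sin_{M}^{2}(r)$. By contrast, you bypass this telescoping entirely by invoking the intrinsic identity for $\langle\nabla_{\mathbb{S}^{n-1}}\omega_{i},\nabla_{\mathbb{S}^{n-1}}\omega_{j}\rangle_{g_{0}}$, which encapsulates the same cancellation in coordinate-free form. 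Your approach is shorter, treats $\mathbb{S}^{n}$ and $\mathbb{H}^{n}$ uniformly without any separate bookkeeping, and makes transparent why the angular contribution has the form it does; the paper's explicit computation, on the other hand, requires no prior knowledge of the tangential-projection formula for $\nabla_{\mathbb{S}^{n-1}}\omega_{i}$ and is fully self-contained at the level of coordinate calculus.
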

	\begin{proof} First we consider  $M=\mathbb{S}^n$. We view of Remark \ref{relation between r and phi_1}, we replace $r$ with $\phi_1$ and do the calculations.  By a straight forward calculation, we can see that for $i \leq n-1$,
		\begin{align} \label{gradient cal 1} \nonumber
			\nabla \left( g(\phi_1) X_i \right) &= \frac{d}{d \phi_1} \left(\phi_1 g(\phi_1) \right) \frac{X_{i}}{\phi_1} \frac{\partial}{\partial \phi_1} + \sum_{k = 2}^{i} g(\phi_1) \frac{\cos (\phi_k)}{\prod_{l=1}^{k-1} \sin^{2} (\phi_l)} \frac{X_{i}}{\sin (\phi_k)} \frac{\partial}{\partial \phi_k} \\
			& \quad - g(\phi_1) \frac{\sin (\phi_{i+1})}{\prod_{l=1}^{i}\sin^{2} (\phi_l)} \frac{X_{i}}{\cos (\phi_{i+1})} \frac{\partial}{\partial \phi_{i+1}}.
		\end{align}
		For $i = n$,
		\begin{align} \label{gradient cal 2}
			\nabla \left( g(\phi_1) X_n \right) = \frac{d}{d \phi_1} \left(\phi_1 g(\phi_1) \right) \frac{X_{n}}{\phi_1} \frac{\partial}{\partial \phi_1} + \sum_{k = 2}^{n} g(\phi_1) \frac{\cos (\phi_k)}{\prod_{l=1}^{k-1} \sin^{2} (\phi_l)} \frac{X_{n}}{\sin (\phi_k)} \frac{\partial}{\partial \phi_k}.
		\end{align}
		From \eqref{gradient cal 1} and \eqref{gradient cal 2}, we conclude that for $1 \leq i < j \leq n$,
		\begin{align*}
			\langle \nabla \left( g(\phi_1) X_i \right), \nabla \left( g(\phi_1) X_j \right) \rangle & = \left( \frac{d}{d \phi_1} \left(\phi_1 g(\phi_1) \right)\right)^{2} \frac{X_{i} X_{j}}{\phi_{1}^{2}} + \sum_{k = 2}^{i} g^{2}(\phi_1) \frac{\cos^{2} (\phi_k)}{\prod_{l=1}^{k-1} \sin^{2} (\phi_l)} \frac{X_{i} X_{j}}{\sin^{2} (\phi_k)} \\
			& \quad - g^{2}(\phi_1) \frac{X_{i} X_{j}}{\prod_{l=1}^{i}\sin^{2} (\phi_l)}, \\
			& = \left( \frac{d}{d \phi_1} \left(\phi_1 g(\phi_1) \right)\right)^{2} \frac{X_{i} X_{j}}{\phi_{1}^{2}} + \sum_{k = 2}^{i-1} g^{2}(\phi_1) \frac{\cos^{2} (\phi_k)}{\prod_{l=1}^{k-1} \sin^{2} (\phi_l)} \frac{X_{i} X_{j}}{\sin^{2} (\phi_k)} \\
			& \quad - g^{2}(\phi_1) \frac{X_{i} X_{j}}{\prod_{l=1}^{i-1}\sin^{2} (\phi_l)}, \\
			& = \left( \frac{d}{d \phi_1} \left(\phi_1 g(\phi_1) \right)\right)^{2} \frac{X_{i} X_{j}}{\phi_{1}^{2}} - g^{2}(\phi_1) \frac{X_{i} X_{j}}{\sin^{2} (\phi_1)}.
		\end{align*}
		Using similar calculations, we can establish  \eqref{grad:inner product} for $M=\mathbb{H}^n$, and \eqref{grad:norm} for both  $M=\mathbb{S}^n$  and  $M=\mathbb{H}^n.$ 
	\end{proof}
	Next, we prove some orthogonality results of test functions which are crucial for proving the main result.
	\subsection{Orthogonality of test functions}
	The orthogonality of test functions in $L^{2}(\Omega)$ and $H^{1}(\Omega)$ for $\Omega \subset \mathbb{R}^{n}$ has been proved in \cite{ABD}. In the following proposition, we generalize this result for $\Omega \subset \mathbb{S}^{n}$ and $\mathbb{H}^{n}$.
	\begin{prop}\label{Prop:integral inequalities}
		Let $\Omega$ be a bounded domain in $M$ and $\{X_i:i=1,2,\ldots, n\}$ be as given in \eqref{Geodesic normal coordinates on sphere}. Let $g: [0,\infty) \rightarrow \mathbb{R}$ be any smooth function. Then for any $i,j \in \{1, 2, \ldots, n\}$ with $i \neq j$ and $m \in \mathbb{N}\cup\{0\}$, the following assertions hold:
		\begin{enumerate}[label = (\roman*)]
			\item If $\Omega$ is centrally symmetric, then
			\begin{align*}
				\int_{\Omega} g(r) X_{i} X_{j}^{2m} \ dV_{X} = 0 \text{ and } \int_{\Omega} g(r) X_{i}^{2m+1} \ dV_{X} = 0.
			\end{align*}  
			\item If $n \geq 3$ and $\Omega$ is symmetric of order $2$, then
			\begin{align*}
				\int_{\Omega} g(r) X_{i} X_{j}^{m} \ dV_{X} = 0 \text{ and } \int_{\Omega} g(r) X_{i}^{2m+1} \ dV_{X} = 0.   
			\end{align*}
			
			\item If $\Omega$ is symmetric of order $4$, then 
			\begin{align*}
				\int_{\Omega} g(r) X_{i} X_{j} \ dV_{X} = 0.
			\end{align*}
			
			\item If $\Omega$ is symmetric of order $4$, then there exist constants $A_{1}, A_{2}$ such that
			\begin{align*}
				\int_{\Omega} g(r) X_{i}^{2} \ dV_{X} = A_{1} \text{ and } \int_{\Omega} g(r) X_{i}^{4} \ dV_{X} = A_{2}  \text{ for all }  i \in \{1, 2, \ldots, n\}. 
			\end{align*}
			\item If $\Omega$ is symmetric of order $4$, then 
			\begin{align*}
				\int_{\Omega}  \big \langle \nabla \left(g(r) X_{i} \right), \nabla \left(g(r) X_{j} \right) \big \rangle \ dV_{X} = 0.
			\end{align*}
		\end{enumerate}
	\end{prop}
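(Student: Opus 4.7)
The plan is to exploit the rotation-invariance of both the radial coordinate $r$ and the volume element $dV_X$, combined with the symmetry of $\Omega$. I would first observe that, because the Riemannian metric in geodesic polar coordinates has the form $dr^2+\sin_M^2(r)g_0(\Theta)$, passing to Cartesian normal coordinates via $X = r\Theta$ yields $dV_X = \psi(r)\,dX_1\cdots dX_n$ with radial density $\psi(r)=\sin_M^{n-1}(r)/r^{n-1}$. Consequently, for any orthogonal transformation $T$ of $T_p M\cong\mathbb{R}^n$ under which $\Omega$ is invariant, the substitution $X\mapsto TX$ preserves both $r$ and $dV_X$, giving $\int_\Omega F(X)\,dV_X=\int_\Omega F(TX)\,dV_X$ for every integrable $F$. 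After choosing the orthonormal basis $\{e_2,\ldots,e_{n+1}\}$ used in \eqref{Geodesic normal coordinates on sphere} so that the auxiliary rotation appearing in the definition of $k$-symmetry is the identity, I may assume the symmetry group acts on $\Omega$ directly through the axis rotations $R_{i,j}^{2\pi/k}$.

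For (i), I take $T=-\mathrm{Id}$, which preserves $\Omega$ by central symmetry; the integrands $g(r)X_iX_j^{2m}$ and $g(r)X_i^{2m+1}$ are odd under $T$, so each integral equals its own negative and vanishes. For (ii), since $n\geq 3$ I can pick $k\notin\{i,j\}$ and apply $T=R_{i,k}^{\pi}$, which flips $X_i$ while leaving $X_j$ fixed, so $g(r)X_iX_j^m\mapsto -g(r)X_iX_j^m$; for the second integral, any $R_{i,j}^{\pi}$ with $j\neq i$ flips $X_i^{2m+1}$ to its negative. For (iii), the rotation $R_{i,j}^{\pi/2}$ sends $(X_i,X_j)\mapsto(-X_j,X_i)$ by \eqref{change in coordinates}, so $X_iX_j\mapsto -X_iX_j$. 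For (iv), this same rotation carries $X_i^{2}$ to $X_j^{2}$ and $X_i^{4}$ to $X_j^{4}$, forcing $\int_\Omega g(r)X_i^{2}\,dV_X$ and $\int_\Omega g(r)X_i^{4}\,dV_X$ to be independent of $i$.

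For (v), I would substitute the pointwise identity \eqref{grad:inner product} from Lemma \ref{lem: gradient on sphere and hyperbolic space}, which expresses $\langle\nabla(g(r)X_i),\nabla(g(r)X_j)\rangle$ for $i\neq j$ as a purely radial function of $r$ multiplied by $X_iX_j$; integrating term-by-term over $\Omega$ and invoking (iii) yields the claim. The single technical checkpoint is the rotation-invariance of $dV_X$ in normal coordinates; once this is in place, every part reduces to a direct change-of-variables argument and no further analytic estimates are required.
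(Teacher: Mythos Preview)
Your proposal is correct and follows essentially the same route as the paper: each part is obtained by the very same change of variables (the antipodal map for (i), $R_{i,k}^{\pi}$ with $k\notin\{i,j\}$ for (ii), $R_{i,j}^{\pi/2}$ for (iii)--(iv), and Lemma~\ref{lem: gradient on sphere and hyperbolic space} together with (iii) for (v)). If anything, you are slightly more explicit than the paper on two points the authors take for granted---namely the rotation-invariance of $dV_X$ via the radial density $\sin_M^{n-1}(r)/r^{n-1}$, and the reduction of the auxiliary rotation $R$ in the definition of $k$-symmetry to the identity by an appropriate choice of orthonormal frame.
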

	\begin{proof}
		\begin{enumerate}[label = (\roman*)]
			\item For $X \in T_{p}(M)$, Using the transformation $Y = -X$, we obtain
			\begin{align*}
				\int_{\Omega} g(r) Y_{i} Y_{j}^{2m} \ dV_{Y} = - \int_{\Omega} g(r) X_{i} X_{j}^{2m} \ dV_{X} \text{ and } \int_{\Omega} g(r) Y_{i}^{2m+1} \ dV_{Y} = \int_{\Omega} g(r) X_{i}^{2m+1} \ dV_{X}.   
			\end{align*}
			This implies
			\begin{align*}
				\int_{\Omega} g(r) X_{i} X_{j}^{2m} \ dV_{X} = 0 \text{ and } \int_{\Omega} g(r) X_{i}^{2m+1} \ dV_{X} = 0,   
			\end{align*}
			\noindent which proves our claim.
			
			\item Since $n \geq 3$, choose $k$ such that $k \neq i,j$. Then using the transformation $$Y = R_{i,k}^{\frac{2 \pi}{2}} X \ ((Y_{1}, \ldots, Y_{i}, \ldots, Y_{k}, \ldots, Y_{n}) = (X_{1}, \ldots, - X_{i}, \ldots, - X_{k}, \ldots, X_{n})),$$ we get
			\begin{align*}
				\int_{\Omega} g(r) Y_{i} Y_{j}^{m} \ dV_{Y} = - \int_{\Omega} g(r) X_{i} X_{j}^{m} \ dV_{X} \text{ and } \int_{\Omega} g(r) Y_{i}^{2m+1} \ dV_{Y} = - \int_{\Omega} g(r) X_{i}^{2m+1} \ dV_{X}.  
			\end{align*}
			Hence the desired result follows.
			
			\item The transformation $$Y = R_{i,j}^{\frac{2 \pi}{4}} X \ ((Y_{1}, \ldots, Y_{i}, \ldots, Y_{j}, \ldots, Y_{n}) = (X_{1}, \ldots, - X_{j}, \ldots, X_{i}, \ldots, X_{n}))$$ yields
			\begin{align*}
				\int_{\Omega} g(r) Y_{i} Y_{j} \ dV_{Y} = - \int_{\Omega} g(r) X_{i} X_{j} \ dV_{X}.
			\end{align*}
			This implies the desired expression.
			
			\item For any $i \neq 1$, applying the transformation $$Y = R_{1,i}^{\frac{2 \pi}{4}} X \ ((Y_{1}, \ldots, Y_{i}, \ldots, Y_{n}) = (- X_{i}, \ldots, X_{1}, \ldots, X_{n})),$$ for all  $i \in \{2, \ldots, n\}$, we can write  
			\begin{align*}
				\int_{\Omega} g(r) X_{i}^{2} \ dV_{X} = \int_{\Omega} g(r) Y_{1}^{2} \ dV_{Y}, \\
				\int_{\Omega} g(r) X_{i}^{4} \ dV_{X} = \int_{\Omega} g(r) Y_{1}^{4} \ dV_{Y}. 
			\end{align*}
			This provide the required constants $A_{1} = \int_{\Omega} g(r) Y_{1}^{2} \ dV_{Y}$ and $A_{2} = \int_{\Omega} g(r) Y_{1}^{4} \ dV_{Y}$.
			
			\item This follows easily from Lemma \ref{lem: gradient on sphere and hyperbolic space}.
		\end{enumerate}
	\end{proof}


	\section{Proof of Theorem \ref{main result} and a few remarks} \label{sec:main result}
	
	We consider the Rayleigh quotient,  
	\begin{align*} 
		\mathcal{R}_\Omega(u) = \frac{\int_{\Omega} \vert \nabla u \vert^2 dV}{\int_{\Omega} u ^2 dV},\; u\in H^1(\Omega)\setminus\{0\}.
	\end{align*}
	By the Courant-Fischer minimax formula, the Neumann eigenvalues of \eqref{Neumann problem} are characterized by the following variational formula
	\begin{align} \label{minmaxOmega}
		\mu_{j+1}(\Omega) = \min_{E \in \mathcal{H}_j} \max_{0 \neq u \in E} \mathcal{R}_\Omega(u),
	\end{align}
	where $\mathcal{H}_j$ is the set of all $j$-dimensional subspaces of the Sobolev space $H^1(\Omega)$ that are orthogonal to the subspace of constant functions. In particular, the first positive Neumann eigenvalue $\mu_{2}(\Omega)$ is given by
	\begin{equation} \label{minmax2}
		\mu_{2}(\Omega) = \min_{u \in H^1(\Omega) \setminus \{0\}} \bigg \{  \mathcal{R}_\Omega(u) \bigg| \int_{\Omega} u \ dV = 0 \bigg \}.
	\end{equation}

	\subsection{Proof of \eqref{main result: order 2}}
	For $r>R_1,$ let $G(r)=G_1(r),$ where $G_1$ is defined as in Proposition \ref{prop:comparison with eigenvalue}.
	Now by Proposition \ref{Prop:integral inequalities}, we have
	\begin{align*}
		\int_{\Omega} \frac{G(r)}{r} X_{i} \ dV = 0 \text{ for all } i = 1, 2, \ldots, n,
	\end{align*}
	where $X_{1}, X_{2}, \ldots, X_{n}$ are the geodesic normal coordinates mentioned in \eqref{Geodesic normal coordinates on sphere}. Thus using $\frac{G(r)}{r} X_{i}$ as a test function in \eqref{minmax2} and summing over $i = 1, 2, \ldots, n$, we get
	\begin{align} \label{ineq: variational main result}
		\mu_{2}(\Omega) \sum_{i=1}^{n}  \int_{\Omega} \left( \frac{G(r)}{r} X_{i} \right)^2 dV  \leq \sum_{i=1}^{n} \int_{\Omega} \bigg \vert \nabla \left( \frac{G(r)}{r} X_{i} \right) \bigg \vert^2 dV.
	\end{align}
	Using Lemma \ref{lem: gradient on sphere and hyperbolic space} and the fact $\sum_{i=1}^{n}X_i^2=r^2$,  we obtain
	\begin{align*}
		\sum_{i=1}^{n} \bigg | \nabla \left( \frac{G(r)}{r} X_{i} \right)\bigg |^{2} = \left( G'(r)\right)^{2} + \frac{n-1}{\sin_{M}^{2}(r)} G^{2}(r).
	\end{align*}
	Now from  \eqref{ineq: variational main result} and Proposition \ref{prop:comparison with eigenvalue} we get
	\begin{align*}
		\mu_{2}(\Omega) \leq \frac{\int_{\Omega} \left( \left( G'(r)\right)^{2} + \frac{n-1}{\sin_{M}^{2}(r)} G^{2}(r) \right) dV}{\int_{\Omega} G^{2}(r) dV}\le \mu_{1,1}.
	\end{align*}
	Since $\mu_{1,1} = \mu_2(B_{R_{2}} \setminus \overline{B}_{R_{1}})$, we get the desired result.
	
	\subsection{Proof of \eqref{main result: order 4}} In view of Remark \ref{rmk: multiplicity of annular eigenvalues}, it is enough to show that $\mu_{n+1}(\Omega) \leq \mu_{1,1}$. For this, consider the n-dimensional subspace of $H^1(\Omega)$
	\begin{align*}
		E = \text{ span } \bigg \{ \frac{G(r)}{r} X_{i} \ \Big | \ i = 1,2, \ldots, n \bigg \}.
	\end{align*}
	By Proposition \ref{Prop:integral inequalities}, 
	for   $i,j = 1, 2, \ldots, n$ with $i \neq j$, we  have
	\begin{equation}\label{orthogonality}
		\begin{aligned} 
			\int_{\Omega} \frac{G(r)}{r} X_{i} \ dV & = 0,\\
			\int_{\Omega}   \left(\frac{G(r)}{r} X_{i} \right) \left(\frac{G(r)}{r} X_{j} \right) \, dV  &= 0, \\
			\int_{\Omega} \Big \langle \nabla \left(\frac{G(r)}{r} X_{i} \right),  \nabla \left(\frac{G(r)}{r} X_{j} \right) \Big \rangle \, dV &= 0.
		\end{aligned}
	\end{equation}
	Moreover, there exist constants $C$ and $D$ such that, for all $i = 1,2, \ldots, n$,
	\begin{equation}\label{eqn:main1}
		\begin{aligned}
			&\int_{\Omega} \left( \frac{G(r)}{r} X_i \right)^{2} \, dV =  C,\\
			&\int_{\Omega} \Big | \nabla \left( \frac{G(r)}{r} X_i \right) \Big |^{2} \, dV = \int_{\Omega} \left[ \left( G'(r)\right)^{2} \left(\frac{X_i}{r}\right)^{2}  +  \frac{G^{2}(r)}{\sin_{M}^{2}(r)} \left( 1 - \left(\frac{X_i}{r }\right)^{2} \right)\right] \, dV = D,
		\end{aligned}
	\end{equation}
	where the second equality follows from Lemma \ref{lem: gradient on sphere and hyperbolic space}. Now, by summing up the above equations over $i,$ we get
	\begin{equation}\label{eqn:main2}
		\begin{aligned}
			C&=\frac{1}{n}\int_{\Omega} G^{2}(r) \, dV, \qquad 
			D=\frac{1}{n}\int_{\Omega} \left( \left( G'(r)\right)^{2} + \frac{n-1}{\sin_{M}^{2}(r)} G^{2}(r) \right) \, dV.
		\end{aligned}
	\end{equation}
	Let $u\in E\setminus \{0\}$. Then there exists $c=(c_1,c_2,\ldots, c_n)\in \mathbb{R}^n\setminus \{0\}$ such that 
	$u = \sum_{i=1}^{n} c_{i} \frac{G(r)}{r} X_{i}.$ Now, using \eqref{orthogonality} and  \eqref{eqn:main1} we obtain:
	\begin{align*} 
		\mathcal{R}_\Omega(u)=\frac{\int_{\Omega} \vert \nabla u \vert^2 \, dV}{\int_{\Omega} u^2 \, dV}
		& = \frac{ \int_{\Omega}  \vert \sum_{i=1}^{n} c_{i} \nabla \left( \frac{G(r)}{r} X_{i} \right) \Big \vert^2 dV }{  \int_{\Omega} \left(  \sum_{i=1}^{n} c_{i} \frac{G(r)}{r} X_{i} \right) ^2 dV }
		= \frac{ \sum_{i=1}^{n} c_{i}^{2} \int_{\Omega} \Big \vert \nabla \left( \frac{G(r)}{r} X_{i} \right) \Big \vert^2 dV }{ \sum_{i=1}^{n} c_{i}^{2}  \int_{\Omega}  \left( \frac{G(r)}{r} X_{i} \right) ^2 dV } 
		= \frac{D}{C}. 
	\end{align*}
	Therefore, from \eqref{eqn:main2} and Proposition \ref{prop:comparison with eigenvalue}, we conclude that
	$$\mathcal{R}_\Omega(u)=\frac{\int_{\Omega} \left( \left( G'(r)\right)^{2} + \frac{n-1}{\sin_{M}^{2}(r)} G^{2}(r) \right) \, dV}{\int_{\Omega} G^{2}(r) \, dV}\le \mu_{1,1}=\mu_2(B_{R_{2}} \setminus \overline{B}_{R_{1}}), u\in E\setminus \{0\}.$$
	Thus  \eqref{minmaxOmega} yields:
	$$\mu_{n+1}(\Omega)\le \max_{u\in E\setminus \{0\}}\mathcal{R}_\Omega(u)\le \mu_2(B_{R_{2}} \setminus \overline{B}_{R_{1}}).$$
	\begin{rmk}
		Our proof also work for $M = \mathbb{R}^{n}$ by considering standard cartesian coordinates as normal coordinates centered at the origin with Riemannian metric $g = dr^{2} + r^{2} g_{\mathbb{S}^{n-1}}$.
	\end{rmk}
	\begin{rmk}
		It is natural to anticipate  the main Theorem \ref{main result} for symmetric domains in smooth Riemannian manifold $M = [0,R) \times \mathbb{S}^{n-1}$ equipped with the warped product metric $g = dr^{2} + h^{2}(r) g_{\mathbb{S}^{n-1}}$. Here function $h \in C^{\infty}([0,R))$, $h(r) > 0$ for $r \in (0, R)$, $h'(0) = 1$ and $h^{(2k)}(0)=0$ for all integers $k \geq 0$.     For such manifolds 
		\begin{align*}
			\text{ tr(A(r))} = \frac{1}{h^{n-1}(r)} \frac{d}{dr} \left(h^{n-1}(r) \right) \text{ and } \lambda_{1}(S(r)) = \frac{n-1}{h^{2}(r)}
		\end{align*}
		Thus,  for  proving  Lemma \ref{eigenvalue comparison}, we need   $(\text{tr}(A(r)))' = - \lambda_{1}(S(r))$. This is true if and only if 
		\begin{align*}
			h(r)h''(r) - h'^{2}(r) = - 1,
		\end{align*}
		and the only solution of this ODE are  $h(r) = r, \sin r$ and $\sinh r$.  Thus our proof works only for $h(r) = r, \sin r$ and $\sinh r$. 
	\end{rmk}

	\begin{rmk}
		Next we list some related open problems.
		
		\begin{enumerate}

			\item For $M=\mathbb{S}^n$, we proved the main result only for then domains contained in the hemisphere. Is it possible to extend   the main results for domains that are not contained in the hemisphere?
			
			\item Rank-$1$ symmetric spaces naturally generalise the space forms as the isometry group acts transitively on the unit tangent bundle. Thus establishing   Szeg\"{o}-Weinberger inequality for the higher Neumann eigenvalues for the  bounded domains in rank-1 symmetric spaces and also in manifolds with bounded curvature seems to be some interesting problems. The challenging part, in these cases, is to find the appropriate geodesic normal coordinates.
			
		\end{enumerate}
	\end{rmk}

	\textbf{Acknowledgement:} Authors would like to thank Prof. G. Santhanam for useful discussions and his suggestions.
	\normalem
	

\begin{thebibliography}{9}
		\bibitem{ARGS} A. R. Aithal, G. Santhanam, Sharp upper bound for the first nonzero Neumann eigenvalue for bounded domains in rank-1 symmetric spaces, \emph{Transactions of the American Mathematical Society} \textbf{348}(10), 3955-3965 (1996). \href{ https://doi.org/10.1090/S0002-9947-96-01682-0}{doi:10.1090/S0002-9947-96-01682-0}
		
		\bibitem{ABD} T. V. Anoop, V. Bobkov and P. Drabek, Szeg\"{o}-Weinberger type inequalities for symmetric domains with holes, \emph{SIAM Journal on Mathematical Analysis}, \textbf{54}(1), 389-422 (2022). \href{https://doi.org/10.1137/21M1407227}{doi:10.1137/21M1407227}
		
		\bibitem{AB1} M.S. Ashbaugh, R.D. Benguria, Universal bounds for the low eigenvalues of Neumann Laplacians in $N$ dimensions, \emph{SIAM Journal on Mathematical Analysis}, \textbf{24}(3), 557-570 (1993). \href{https://doi.org/10.1137/0524034}{doi:10.1137/0524034}
		
		\bibitem{AB} M.S. Ashbaugh, R.D. Benguria, Sharp upper bound to the first nonzero Neumann eigenvalue for bounded domains in spaces of constant curvature, \emph{Journal of the London Mathematical Society} \textbf{52}(2), 402-416 (1995). \href{ https://doi.org/10.1112/jlms/52.2.402}{doi:10.1112/jlms/52.2.402}
		
		\bibitem{B1}  C. Bandle, Isoperimetric inequality for some eigenvalues of an inhomogeneous free membrane, \emph{SIAM Journal on Applied Mathematics} \textbf{22}(2), 142-147 (1972). \href{https://doi.org/10.1137/0122016}{doi:10.1137/0122016}
		
		\bibitem{B2} C. Bandle, Isoperimetric inequalities and applications, \emph{Pitman Monographs and Studies in Mathematics} \textbf{7} (Pitman, Boston, 1980).
		
		\bibitem{BBC}  R. D. Benguria, B. Brandolini and F. Chiacchio, A sharp estimate for Neumann eigenvalues of the Laplace-Beltrami operator for domains in a hemisphere, \emph{Communications in Contemporary Mathematics} \textbf{22}(03), 1950018 (2020). \href{https://doi.org/10.1142/S0219199719500184}{doi:10.1142/S0219199719500184}
		
		
		\bibitem{BH} D. Bucur, A. Henrot, Maximization of the second non-trivial Neumann eigenvalue, \emph{Acta Mathematica} \textbf{222}(2), 337-361 (2019). \href{ https://dx.doi.org/10.4310/ACTA.2019.v222.n2.a2}{doi:10.4310/ACTA.2019.v222.n2.a2}
		
		\bibitem{C1} I. Chavel, 'Lowest eigenvalue inequalities', Geometry of the Laplace operator, Proceedings of Symposia in Pure Mathematics 36 (ed. S. S. Chern and A. Weinstein; American Mathematical Society, Providence, 1980) 79-89.
		
		\bibitem{C2} I. Chavel, Eigenvalues in Riemannian geometry (Academic, New York, 1984).
		
		\bibitem{EP1} C. Enache, G. A. Philippin, Some inequalities involving eigenvalues of the Neumann Laplacian, \emph{Mathematical Methods in the Applied Sciences} \textbf{36}(16), 2145-2153 (2013). \href{https://doi.org/10.1002/mma.2743}{doi:10.1002/mma.2743}
		
		\bibitem{EP2} C. Enache, G. A. Philippin, On some isoperimetric inequalities involving eigenvalues of symmetric free membranes, \emph{ZAMM Zeitschrift f\"{u}r Angewandte Mathematik und Mechanik} \textbf{95}(4), 424-430 (2015). \href{ https://doi.org/10.1002/zamm.201300211}{doi:10.1002/zamm.201300211}
		
		\bibitem{FL} P. Freitas, R. S. Laugesen, Two balls maximise the third Neumann eigenvalue in hyperbolic space. arXiv preprint arXiv:2009.09980. (2020). \href{https://arxiv.org/abs/2009.09980}{https://arxiv.org/abs/2009.09980}
		
		\bibitem{GNP} A. Girouard, N. Nadirashvili and I. Polterovich, Maximisation of the second positive Neumann eigenvalue for planar domains, \emph{Journal of Differential Geometry} \textbf{83}(3), 637-662 (2009). \href{https://doi.org/10.4310/jdg/1264601037}{doi:10.4310/jdg/1264601037}
		
		\bibitem{H} J. Hersch, On symmetric membranes and conformal radius: Some complements to P\'{o}lya's and Szeg\"{o}'s inequalities, \emph{Archive for Rational Mechanics and Analysis}, \textbf{20}(5), 378-390 (1965). \href{https://doi.org/10.1007/BF00282359}{doi:10.1007/BF00282359}
		
		\bibitem{KS} E. T. Kornhauser and I. Stakgold, A variational theorem for $\nabla^{2}u + \lambda u = 0$ and its application. \emph{Journal of Mathematics and Physics}, \textbf{31}(1-4), 45-54 (1952). \href{https://doi.org/10.1002/sapm195231145}{doi:10.1002/sapm195231145}
		
		\bibitem{SH} M. A. Shubin, Pseudodifferential operators and spectral theory (Vol. 200, No. 1). Berlin: Springer-
		Verlag. \href{https://doi.org/10.1007/978-3-642-56579-3} {doi:10.1007/978-3-642-56579-3}
		
		\bibitem{S} G. Szeg\"{o}, Inequalities for certain eigenvalues of a membrane of given area, \emph{Journal of Rational Mechanics and Analysis} \textbf{3}, 343-356 (1954). \href{https://www.jstor.org/stable/24900293}{https://www.jstor.org/stable/24900293}
		
		\bibitem{SV} S. Verma, An upper bound for the first nonzero Neumann eigenvalue, \emph{Journal of Geometry and Physics} \textbf{157}, 103838 (2020). \href{https://doi.org/10.1016/j.geomphys.2020.103838}{doi:10.1016/j.geomphys.2020.103838}
		
		\bibitem{SVGS} S. Verma and G. Santhanam, On eigenvalue problems related to the Laplacian in a class of doubly connected domains, \emph{Monatshefte f\"{u}r Mathematik} \textbf{193}, 879-899 (2020). \href{https://doi.org/10.1007/s00605-020-01466-9}{doi:10.1007/s00605-020-01466-9}
		
		\bibitem{KW} K. Wang, An upper bound for the second Neumann eigenvalue on Riemannian manifolds, \emph{Geometriae Dedicata} \textbf{201}(1), 317-323 (2019). \href{https://doi.org/10.1007/s10711-018-0394-6}{doi:10.1007/s10711-018-0394-6}
		
		\bibitem{WX} C. Xia and Q. Wang, On a conjecture of Ashbaugh and Benguria about lower eigenvalues of the Neumann Laplacian, \emph{Mathematische Annalen} (2022). \href{https://doi.org/10.1007/s00208-021-02336-x}{doi:10.1007/s00208-021-02336-x}
		
		
		\bibitem{W} H. F. Weinberger, An isoperimetric inequality for the N-dimensional free membrane problem, \emph{Journal of Rational Mechanics and Analysis} \textbf{5}(4), 633-636 (1956). \href{https://www.jstor.org/stable/24900219} {https://www.jstor.org/stable/24900219}
		
		\bibitem{Xu} Y. Xu, The first nonzero eigenvalue of Neumann problem on Riemannian manifolds, \emph{The Journal of Geometric Analysis } \textbf{5}(1), 151-165 (1995). \href{https://doi.org/10.1007/BF02926446} {doi:10.1007/BF02926446}
		
	\end{thebibliography}
\end{document}